\newcommand\K{\mathbb{C}}
\newcommand\C{\mathbb{C}}
\newcommand\h{\mathfrak{h}}
\newcommand\rk{\operatorname{rk}}
\newcommand\id{\operatorname{id}}
\newcommand\GL{\operatorname{GL}}
\newcommand\Ocat{\mathcal{O}}
\newcommand\m{\mathfrak{m}}
\newcommand\Res{\operatorname{Res}}
\newcommand\res{\operatorname{res}}
\newcommand\Ind{\operatorname{Ind}}
\newcommand\ind{\operatorname{ind}}
\newcommand\Fun{\operatorname{Fun}}
\newcommand\eu{\operatorname{eu}}
\newcommand\Ad{\operatorname{Ad}}
\newcommand\z{\mathfrak{z}}
\newcommand\Leaf{\mathcal{L}}
\newcommand\ad{\operatorname{ad}}
\newcommand\Z{\mathbb{Z}}
\newcommand\CC{\operatorname{C}}
\newcommand\A{\mathcal{A}}
\newcommand\cf{\operatorname{c}}
\newcommand{\hb}{h}
\newtheorem{Thm}{Theorem}[section]
\newtheorem{Prop}[Thm]{Proposition}
\newtheorem{Lem}[Thm]{Lemma}
\theoremstyle{definition}
\author{Ivan Losev}
\title{On isomorphisms of certain functors for  Cherednik algebras}
\thanks{Supported by the NSF grant DMS-0900907}
\thanks{MSC 2010:  16G99}
\thanks{Address: MIT, Dept. of Math., 77 Massachusetts Avenue, Cambridge MA02139, USA}
\thanks{E-mail: ivanlosev@math.mit.edu}
\begin{document}
\begin{abstract}
Bezrukavnikov and Etingof introduced some functors between the categories $\Ocat$
for rational Cherednik algebras. Namely, they defined two induction functors
$\Ind_b, \ind_\lambda$ and two restriction functors $\Res_b,\res_\lambda$. They
conjectured that one has functor isomorphisms $\Ind_b\cong \ind_\lambda,
\Res_b\cong \res_\lambda$. The goal of this paper is to prove this conjecture.
\end{abstract}
\maketitle

\section{Introduction}
The goal of this paper is to establish isomorphisms between certain functors arising
in the representation theory of rational Cherednik algebras. These functors are parabolic
induction and restriction functors introduced by Bezrukavnikov and Etingof in \cite{BE}.

Let us recall the definition of a rational Cherednik algebra that first appeared in \cite{EG}.
The base field is the field $\K$ of complex numbers. Let $\h$ be a finite
dimensional vector space and $W\subset \GL(\h)$ be a finite subgroup generated by the subset
$W\subset S$ of {\it complex reflections}. By definition, a complex reflection is an element $s\in \GL(\h)$
of finite order with $\rk(s-\id)=1$. For $s\in S$ pick elements $\alpha_s^\vee\in \operatorname{im}(s-\id)$
and $\alpha_s\in (\h/\ker(s-\id))^*\subset \h^*$ with $\langle\alpha_s,\alpha^\vee_s\rangle=2$.
Also pick a $W$-invariant map $c:S\rightarrow \K$.  Define the rational Cherednik algebra
$H(=H_{c}(W,\h))$ as the quotient of $T(\h\oplus\h^*)\# W$
by the relations
\begin{equation}\label{eq:relations1}
\begin{split}
& [x,x']=0,\\
& [y,y']=0,\\
& [y,x]=\langle y,x\rangle-\sum_{s\in S}c(s)\langle \alpha_s,y\rangle\langle \alpha_s^\vee,x\rangle s,\\
& x,x'\in \h^*, y,y'\in \h.
\end{split}
\end{equation}

We have the triangular decomposition $H= S(\h)\otimes \K W\otimes \K[\h]$. Using this decomposition
one can introduce the category $\Ocat:=\Ocat_c(W,\h)$ for $H$ as the full subcategory of the category of left $H$-modules
consisting of all modules $M$ satisfying the following two conditions:
\begin{enumerate}
\item $M$ is finitely generated as a $\K[\h]$-module.
\item $\h$ acts on $M$ locally nilpotently.
\end{enumerate}

Now pick a parabolic subgroup $\underline{W}\subset W$, i.e., the stabilizer of some point in $\h$.
The space $\h$ decomposes into the direct
sum $\h^{\underline{W}}\oplus \h_{\underline{W}}$, where $\h^{\underline{W}}$ stands for the space
of $\underline{W}$-invariants in $\h$, and $\h_{\underline{W}}$ is a unique $\underline{W}$-stable
complement to $\h^{\underline{W}}$. Consider the rational Cherednik algebra $\underline{H}^+:=H_c(\underline{W},
\h_{\underline{W}})$, where, abusing the notation, $c$ stands for the restriction of $c$ to
$S\cap \underline{W}$. Consider the category $\underline{\Ocat}^+:=\Ocat_c(\underline{W}, \h_{\underline{W}})$.

For $b\in \h,\lambda\in \h^*$ with $W_b=W_\lambda=\underline{W}$
Bezrukavnikov and Etingof in \cite{BE}, Subsection 3.5,
defined the restriction functors $\Res_b,\res_\lambda: \Ocat\rightarrow \underline{\Ocat}^+$
and the induction functors $\Ind_b,\ind_\lambda: \underline{\Ocat}^+\rightarrow \Ocat$.
The definitions will be recalled in Subsection \ref{SUBSECTION_func_def}.
The functors $\Res_b,\Ind_b$ do not depend on $b$ up to a (non-canonical) isomorphism, and the similar
claim holds for $\res_\lambda, \ind_\lambda$.   Conjecture 3.17 in \cite{BE} asserts that
there are non-canonical isomorphisms $\Res_b\cong \res_\lambda, \Ind_b\cong \ind_\lambda$.
In this paper we prove the conjecture. In particular, the conjecture implies that the functors
$\Res_b,\Ind_b$ are biadjoint. This result was obtained earlier by Shan, \cite{Shan}, under
some mild restrictions on the parameter $c$.

The paper is organized as follows.

In Section \ref{SECTION_prelim} we gather all necessary definitions and preliminary results.
In Subsection \ref{SUBSECTION_completions1} we recall the isomorphism of completions theorem
of Bezrukavnikov and Etingof, \cite{BE}, Theorem 3.2. The functors in interest are defined
using this theorem. Their definitions are recalled in Subsection \ref{SUBSECTION_func_def2}.
In Subsection \ref{SUBSECTION_completions2} we recall some other results on isomorphisms
of completions obtained in \cite{sraco} that are used in the proof of the main result.

In Section \ref{SECTION_iso_restrictions} we prove an isomorphism of the functors $\Res_b,\res_\lambda$.
In Subsection \ref{SUBSECTION_func_def2} we introduce some auxiliary functors $\Res_{b,\lambda},\res_{b,\lambda}$
such that $\Res_{b,\lambda}\cong \Res_b, \res_{0,\lambda}\cong \res_\lambda$. Our strategy is to
establish embeddings $\res_{b,\lambda}\hookrightarrow \Res_{b,0}, \res_{0,\lambda}\hookrightarrow \res_{b,\lambda}$. We can establish the latter directly, this is done in Subsection \ref{SUBSECTION_embedding3}.
However, we arrive at some convergence issue with the former embedding. To fix these issues we need
to work with algebras and modules not over $\K$ but over $R:=\K[t^{-1},t]]$. We treat this case
in Subsection \ref{SUBSECTION_Functors_R} and then establish an embedding $\res_{b,\lambda}\hookrightarrow
\Res_{b,0}$ in Subsection \ref{SUBSECTION_embedding2}.
Finally, in Subsection \ref{SUBSECTION_proof_completion} we show that the resulted embedding $\res_{0,\lambda}\hookrightarrow \Res_{b,0}$ is actually an isomorphism.

The proof of an isomorphism $\Ind_b\cong \ind_\lambda$ is similar to that of $\Res_b\cong \res_\lambda$.
In Section \ref{SECTION_iso_inductions} we explain necessary modifications.

{\bf Acknowledgements.} I am grateful to P. Etingof and I. Gordon for stimulating discussions.

\section{Preliminaries}\label{SECTION_prelim}
\subsection{Isomorphisms of completions, I}\label{SUBSECTION_completions1}
In this subsection we will recall some results from \cite{BE} related to
isomorphisms of completions of different rational Cherednik algebras.
Namely, we  define the completions $H^{\wedge_b},H^{\wedge_\lambda},
\underline{H}^{\wedge_b},\underline{H}^{\wedge_\lambda}$, where $\underline{H}:=H_c(\underline{W},\h)$
and describe isomorphisms between $H^{\wedge_b}$ (resp., $H^{\wedge_\lambda})$ and
some matrix algebra with coefficients in $\underline{H}^{\wedge_b}$ (resp, in $\underline{H}^{\wedge_\lambda}$).

Pick a point $b\in \h$ with $W_b=\underline{W}$. Let $\K[\h]^{\wedge_{Wb}}$ denote the completion
of $\K[\h]$ at $Wb$. Define the  completion $H^{\wedge_b}:=\K[\h]^{\wedge_{Wb}}\otimes_{\K[\h]}H$ of $H$ at $b$.
The space $H^{\wedge_b}$ comes equipped with a topology, and has a unique topological algebra structure
extended from $H$ by continuouty.

Similarly, we can define the completion $\underline{H}^{\wedge_b}:=\K[\h]^{\wedge_b}\otimes_{\K[\h]}\underline{H}$ of $\underline{H}$ at $b$.

A relation between $H^{\wedge_b}$ and $\underline{H}^{\wedge_b}$ is as follows.
In \cite{BE}, Subsection 3.2, for finite groups $G_0\subset G$ and an algebra $A$ containing
$\K G_0$ Bezrukavnikov and Etingof considered the right $A$-module $\Fun_{G_0}(G,A)$ of $G_0$-equivariant
maps $G\rightarrow A$. Then they defined the {\it centralizer algebra} $Z(G,G_0,A)$
as the endomorphism algebra of the right $A$-module $\Fun_{G_0}(G,A)$.
Below we write $\CC(\bullet)$ for $Z(W,\underline{W},\bullet)$.

The following proposition is a slightly modified version of
\cite{BE}, Theorem 3.2.

\begin{Prop}\label{Prop:iso1}
There is a unique continuous homomorphism $\vartheta_{b}:H^{\wedge_b}\rightarrow \CC( \underline{H}^{\wedge_b})$
such that
\begin{equation}\label{eq:def_iso1}
\begin{split}
& [\vartheta_{b}(u)f](w)=f(wu),\\
& [\vartheta_{b}(x_\alpha)f](w)= \underline{x}_{w\alpha}f(w),\\
& [\vartheta_{b}(y_a)f](w)=\underline{y}_{wa} f(w)+\sum_{s\in S\setminus W_b}\frac{2c_s}{1-\lambda_s}\frac{\alpha_s(wa)}{\underline{x}_{\alpha_s}}(f(sw)-f(w)).\\
& u,w\in W, \alpha\in \h^*, a\in \h, f\in \Fun_{\underline{W}}(W,\underline{H}^{\wedge_b}).
\end{split}
\end{equation}
This homomorphism is an isomorphism of topological algebras.
\end{Prop}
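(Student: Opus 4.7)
The plan is to adapt the proof of \cite{BE}, Theorem 3.2, of which this proposition is a mild reformulation. There are four things to check: uniqueness, existence (verification of relations), continuity, and bijectivity. \emph{Uniqueness} is immediate because $H$ is dense in $H^{\wedge_b}$ and is algebraically generated by $W$, $\h^*$ and $\h$, so any continuous homomorphism out of $H^{\wedge_b}$ is determined by its values on these generators. \emph{Continuity} of $\vartheta_b$ (once the formulas define an algebra map on $H$) is clear by inspection: each generator's image preserves the natural $\m_b$-adic topology on the target.

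For \emph{existence} one has to verify that the prescriptions (\ref{eq:def_iso1}) respect the defining relations (\ref{eq:relations1}) of $H$. The relation $[x,x']=0$ is trivial because the $\underline{x}_{w\alpha}$ commute; the $W$-equivariance relations $u x_\alpha = x_{u\alpha}u$ and $u y_a = y_{ua} u$ are immediate from the formulas. The relation $[y,y']=0$ is the standard rational Dunkl computation, applied separately to the local $\underline{y}_{wa}$-terms (which commute inside $\underline{H}^{\wedge_b}$) and to the non-local ``difference'' terms (which commute by partial-fraction cancellation). The substantive step is the $[y_a,x_\alpha]$ relation: evaluating $[\vartheta_b(y_a),\vartheta_b(x_\alpha)]f$ at $w$, the local terms coming from $\underline{y}_{wa}$ and $\underline{x}_{w\alpha}$ produce the internal commutator inside $\underline{H}^{\wedge_b}$, which accounts precisely for the sum over reflections $s\in W_b$. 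The cross terms between the local and non-local parts produce the remaining sum over $s\in S\setminus W_b$; the $\underline{x}_{\alpha_s}$ denominator in the definition of $\vartheta_b(y_a)$ is cancelled using the identity $\underline{x}_{sw\alpha}=\underline{x}_{w\alpha}-\langle \alpha_s^\vee,w\alpha\rangle\underline{x}_{\alpha_s}$.

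For \emph{bijectivity} I filter both algebras by the PBW filtration ($\deg \h=1$, $\deg \h^*=\deg W=0$), so that $\gr H^{\wedge_b}\cong \K[\h\oplus\h^*]^{\wedge_{Wb}}\# W$ and $\gr \CC(\underline{H}^{\wedge_b})\cong \CC(\K[\h\oplus \h^*]^{\wedge_b}\#\underline{W})$. The map $\gr \vartheta_b$ is the tautological isomorphism between these two classical completions: a point $b\in \h$ with stabilizer $\underline{W}$ gives the same local data upstairs as its image downstairs, induced up from $\underline{W}$ to $W$. Since $\vartheta_b$ is filtered and $\gr\vartheta_b$ is an isomorphism, and both sides are complete and separated in the filtration topology, $\vartheta_b$ itself is an isomorphism.

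The main obstacle is the $[y_a,x_\alpha]$ calculation: one must correctly match the partition $S=(S\cap W_b)\sqcup (S\setminus W_b)$ with the split between ``internal'' contributions (from $\underline{H}^{\wedge_b}$) and ``external'' contributions (from the centralizer twist), and handle the $\underline{x}_{\alpha_s}$ denominators through the shift $w\mapsto sw$. Once this bookkeeping is organized the required identity falls out by direct computation.
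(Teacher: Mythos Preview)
Your proposal is correct and matches the paper's treatment: the paper does not give an independent proof but simply cites \cite{BE}, Theorem 3.2, of which this is a slight reformulation, and your sketch faithfully reproduces the Bezrukavnikov--Etingof argument (verify relations via the Dunkl-type computation, then deduce bijectivity from the associated graded). There is nothing to add.
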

Here $x_\alpha,\underline{x}_\alpha$ denote the elements of $H, \underline{H}$ corresponding to $\alpha\in \h^*$,
$y_a,\underline{y}_a$ have the similar meaning. Of course, when one views $\frac{1}{\underline{x}_{\alpha_s}}$
as an element of $\K[\h]^{\wedge_b}$, one expands this fraction near $b$.

The completion $\underline{H}^{\wedge_b}$ is naturally isomorphic to the completion $\underline{H}^{\wedge_0,x}:=\K[\h]^{\wedge_0}\otimes_{\K[\h]}\underline{H}$. An isomorphism
$\underline{H}^{\wedge_b}\xrightarrow{\sim} \underline{H}^{\wedge_0,x}$ is given by
$w\mapsto w,\underline{x}_\alpha\mapsto \underline{x}_\alpha-\langle b,\alpha\rangle, \underline{y}_a\mapsto
\underline{y}_a$.

Similarly, one can consider the completions $H^{\wedge_\lambda}, \underline{H}^{\wedge_\lambda}$
at $\lambda\in \h^*$ with $W_\lambda=\underline{W}$. Then one has an isomorphism $\widetilde{\vartheta}_\lambda: H^{\wedge_\lambda}\rightarrow
\CC( \underline{H}^{\wedge_\lambda})$. It is given by
\begin{equation}\label{eq:def_iso2}
\begin{split}
& [\widetilde{\theta}_{\lambda}(u)f](w)=f(wu),\\
& [\widetilde{\theta}_{\lambda}(x_\alpha)f](w)= \underline{x}_{w\alpha}f(w)-\sum_{s\in S\setminus W_\lambda}\frac{2c_s}{1-\lambda_s^{-1}}
\frac{\alpha_s^\vee(wa)}{\underline{y}_{\alpha_s^\vee}}(f(sw)-f(w)),\\
& [\widetilde{\theta}_{\lambda}(y_a)f](w)=\underline{y}_{wa} f(w).
\end{split}
\end{equation}

We remark that both completions we considered were "partial" we allowed power series either
only in $x$'s or only in $y$'s. If we allow both, then the product will not be well defined.

\subsection{Definition of functors}\label{SUBSECTION_func_def}
In this subsection we will introduce  exact functors
$\Res_{b},\res_{\lambda}:\Ocat\rightarrow \underline{\Ocat}^+,
\Ind_{b}, \ind_{\lambda}:\underline{\Ocat}^+\rightarrow \Ocat$
for $b\in \h,\lambda\in\h^*$ with $W_b=W_\lambda=\underline{W}$.

We need to define some auxiliary categories of $H,\underline{H},\underline{H}^+$-modules.
For $\mu\in \h^*$  consider the category $\Ocat^{\mu}$
consisting of all $H_c$-modules $M$ satisfying
\begin{enumerate}
\item $M$ is finitely generated over $S(\h^*)$.
\item $S(\h)^W$ acts on $M$
with generalized eigencharacter $\mu$.
\end{enumerate}
It is easy to see that
$\Ocat^{0}=\Ocat$. More generally, one can consider the category $\widetilde{\Ocat}^{\mu}$
of all $H$-modules satisfying (2). Any module in $\widetilde{\Ocat}^{\mu}$ is the direct limit
of modules in $\Ocat^{\mu}$. Similarly, we have the categories $\widetilde{\underline{\Ocat}},\widetilde{\underline{\Ocat}}^+$.

Now let us recall the definitions of the functors $\Res_b,\res_\lambda,\Ind_b,\ind_\lambda$
from \cite{BE}.

First, we define $\Res_b$.
Pick $M\in \Ocat$ and consider its completion $M^{\wedge_b}:=\K[\h]^{\wedge_{Wb}}\otimes_{\K[\h]}M$
at $Wb$. Then $M^{\wedge_b}$ is an $H^{\wedge_b}$-module and hence we can consider the push-forward
$\vartheta_{b*}(M^{\wedge_b})$ that is a $\CC( \underline{H}^{\wedge_b})$-module.
There is a natural equivalence $I: \underline{H}^{\wedge_b}$-$\operatorname{Mod}\xrightarrow{\sim}
\CC(\underline{H}^{\wedge_b})$-$\operatorname{Mod}$, see \cite{BE}, Subsection 3.2.
So we get a $\underline{H}^{\wedge_b}$-module $I^{-1}\circ \theta_{b*}(M^{\wedge_b})$.
For a $\underline{H}^{\wedge_b}$-module $N'$ and $\lambda\in \h^{*\underline{W}}$ let $\underline{E}_\lambda(N')$ stand for the space of vectors annihilated by $(\underline{y}_a-\langle\lambda,a\rangle)^n$ for all $a\in \h$
and $n\gg 0$. For an $\underline{H}$-module $N$ set $$\zeta_\lambda(N):=\bigcap_{a\in \h^{*\underline{W}}}\ker(\underline{y}_a-\langle \lambda,a\rangle).$$
We set $\Res_{b}(M):=\zeta_0\circ \underline{E}_0 \circ I^{-1}\circ \theta_{b*}(M^{\wedge_b})$.

The proof of the following lemma is easy (compare with Lemma \ref{Lem:fun_iso21} below).
\begin{Lem}\label{Lem:func_iso_easy1}
For any $\lambda\in \h^{*W}$ the functor $\zeta_\lambda\circ \underline{E}_\lambda$ is an equivalence
\begin{itemize}
\item from the category of $\underline{H}^{\wedge_b}$-modules that are finitely generated over
$\K[\h]^{\wedge_0}$,
\item to the category $\underline{\Ocat}^+$.
\end{itemize}
Moreover, the functors $\zeta_\lambda\circ \underline{E}_\lambda$ are naturally isomorphic for all $\lambda\in \h^{*\underline{W}}$.
\end{Lem}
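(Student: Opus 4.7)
The plan is to reduce the statement to a Weyl-algebra computation via the tensor decomposition of $\underline{H}$ along $\h = \h^{\underline{W}} \oplus \h_{\underline{W}}$. For any reflection $s \in S \cap \underline{W}$ the reflecting hyperplane contains $\h^{\underline{W}}$, so $\alpha_s \in \h^*_{\underline{W}}$ and $\alpha_s^\vee \in \h_{\underline{W}}$. Inspecting the relations (\ref{eq:relations1}) then gives $\underline{H} \cong \underline{H}^+ \otimes D(\h^{\underline{W}})$, where $D(\h^{\underline{W}})$ is the algebra of polynomial differential operators on $\h^{\underline{W}}$. Since $b \in \h^{\underline{W}}$, completing at $b$ produces a topological tensor product
\[
\underline{H}^{\wedge_b} \cong \underline{H}^{+,\wedge_0}\,\widehat\otimes\, D(\h^{\underline{W}})^{\wedge_b}.
\]

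Under this decomposition, the operators $\underline{y}_a$ in the definitions of $\underline{E}_\lambda$ and $\zeta_\lambda$ split into those with $a \in \h_{\underline{W}}$ (living in $\underline{H}^+$, on which $\lambda$ vanishes) and those with $a \in \h^{\underline{W}}$ (living in the Weyl factor), so $\zeta_\lambda \circ \underline{E}_\lambda$ factors accordingly. On the $\underline{H}^+$ side, $\underline{E}_0$ extracts the locally $\h_{\underline{W}}$-nilpotent part; this is the standard completion equivalence between $\underline{H}^{+,\wedge_0}$-modules finitely generated over $\K[\h_{\underline{W}}]^{\wedge_0}$ and $\underline{\Ocat}^+$. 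On the Weyl side, the assignment $V \mapsto \{v \in V : \underline{y}_a v = \lambda(a)v \text{ for all } a \in \h^{\underline{W}}\}$ is the fiber functor on coherent $D(\h^{\underline{W}})^{\wedge_b}$-modules; it is an equivalence with $\K$-vector spaces because every such module is trivial (the system $\underline{y}_a v = \lambda(a) v$ has a full formal-power-series solution space), with inverse $U \mapsto \K[\h^{\underline{W}}]^{\wedge_b}\otimes U$ carrying a $\lambda$-shifted flat connection. Combining the two factors yields the required equivalence.

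For the independence claim, I would exhibit a natural isomorphism via an inner automorphism of $D(\h^{\underline{W}})^{\wedge_b}$. Setting $\mu = \lambda' - \lambda \in \h^{*\underline{W}}$ and writing $x_\mu \in \K[\h^{\underline{W}}]$ for the corresponding linear function, the element $\exp(x_\mu - \mu(b))$ converges in $\K[\h^{\underline{W}}]^{\wedge_b}$ (its argument lies in the maximal ideal) and is a unit; conjugation by it implements the shift $\underline{y}_a \mapsto \underline{y}_a - \mu(a)$, so multiplication by this exponential defines a natural isomorphism $\zeta_\lambda \circ \underline{E}_\lambda \xrightarrow{\sim} \zeta_{\lambda'} \circ \underline{E}_{\lambda'}$. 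The main obstacle is justifying the tensor decomposition of module categories at the level of completions while respecting the finite-generation constraints; this should reduce to the uniqueness of the simple coherent $D(\h^{\underline{W}})^{\wedge_b}$-module (a Morita equivalence with $\K$), but requires some bookkeeping to align the categories.
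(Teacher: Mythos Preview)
Your argument is correct. The paper does not give a proof of this lemma beyond calling it easy and pointing to the analogous Lemma~\ref{Lem:fun_iso21}; the sketch there proceeds more directly than you do. For the independence of $\lambda$, the paper simply multiplies by the unit $e^{\lambda}\in\K[\h]^{\wedge_b}$ (your $\exp(x_\mu-\mu(b))$ is this same map up to the harmless scalar $e^{\mu(b)}$). For the equivalence with $\underline{\Ocat}^+$, rather than decomposing the algebra, the paper reduces to checking that $\underline{E}_0(M')$ is finitely generated over $\K[\h]$: one first shows that the Euler element $\underline{\eu}$ acts locally finitely on $\underline{E}_0(M')$, and then finite generation follows as in \cite{BE}, Theorem~2.3.

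Your route via the tensor splitting $\underline{H}\cong\underline{H}^+\otimes D(\h^{\underline{W}})$ is a genuinely different organization: it separates the content into (i) the triviality of $\mathcal{O}$-coherent $D$-modules on a formal disk and (ii) the standard completion equivalence for $\underline{\Ocat}^+$, making the role of $\lambda$ transparent as a choice of flat trivialization on the Weyl factor. The paper's argument is shorter because it never decomposes the module and relies on the Euler grading instead; yours is more structural and makes the quasi-inverse explicit. The ``bookkeeping'' you flag---that finitely generated modules over the completed tensor product factor through the Morita equivalence $D(\h^{\underline{W}})^{\wedge_b}\sim\K$---is routine once one observes that the formal Weyl algebra has a unique simple $\mathcal{O}$-coherent module, namely $\K[\h^{\underline{W}}]^{\wedge_b}$ itself.
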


Let us construct a functor $\Ind_{b}:\underline{\Ocat}^+\rightarrow \Ocat$.
We have an equivalence $\mathcal{F}:=\vartheta_{b*}^{-1}\circ I\circ \underline{E}_0^{-1}\circ \zeta_0^{-1}$ from $\underline{\Ocat}^+$ to the category $\Ocat^{\wedge_{b}}$ of $H^{\wedge_b}$-modules that are finitely generated over $\K[\h]^{\wedge_{Wb}}$. Now for a $H^{\wedge_b}$-module $M'$ let $E_\lambda(M')$ be the generalized eigenspace
of $S(\h)^W$ with eigenvalue $\lambda:S(\h)^W\rightarrow \K$. Set $\Ind_{b}(N):=E_0\circ \mathcal{F}(N)$. 
In \cite{BE}, Subsection 3.5, it was shown that $\Ind_b(\underline{\Ocat}^+)\subset \Ocat$ (a priory,
one only sees that $\Ind_b(\underline{\Ocat}^+)\subset \widetilde{\Ocat}$) and 
that  $\Ind_{b}$ is exact and right adjoint to $\Res_{b}$.

Proceed to the definition of $\res_\lambda$.
Pick $M\in \Ocat$. Again, consider the completion $M^{\wedge_b}$. For an $H$-module  $M_1$
let $E_\lambda(M_1)$ stand for the generalized eigenspace of $S(\h)^W$ corresponding to
the character $\lambda$ in $M_1$. Consider the $H$-module $E_\lambda(M^{\wedge_b})$.
The $H$-action on this module extends to $H^{\wedge_\lambda}$. So we can consider
the push-forward $\widetilde{\theta}_{\lambda*}\circ E_\lambda(M^\wedge_b)$ and also the $\underline{H}^{\wedge_\lambda}$-module
$N':=I^{-1}\circ \widetilde{\theta}_{\lambda*}\circ E_\lambda (M^\wedge_b)$. The operators $\underline{y}_a$
act locally with generalized eigenvalue $\lambda$ on $N'$, in other words, $N'\in \widetilde{\underline{\Ocat}}^\lambda$.

The proof of the following lemma is again easy.

\begin{Lem}\label{Lem:func_iso_easy2}
The functor $\zeta_\lambda$ is an isomorphism $\widetilde{\underline{\Ocat}}^\lambda\rightarrow \widetilde{\underline{\Ocat}}^+$.
\end{Lem}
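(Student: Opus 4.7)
Proof plan. The strategy is to reduce the lemma to a classical statement about Weyl-algebra modules by exhibiting a tensor-product decomposition of $\underline{H}$. The key geometric input is that every reflection $s\in S\cap \underline{W}$ has its root $\alpha_s$ in $\h_{\underline{W}}^*$ and its coroot $\alpha_s^\vee$ in $\h_{\underline{W}}$. Using this observation and the Cherednik relations (\ref{eq:relations1}), I would check directly that for $a\in \h^{\underline{W}}$ and $\alpha\in (\h^{\underline{W}})^*$ the elements $\underline{y}_a$ and $\underline{x}_\alpha$ commute with all generators of $\underline{H}^+$, and that $[\underline{y}_a,\underline{x}_\alpha]=\langle a,\alpha\rangle$. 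Combined with the PBW basis of $\underline{H}$ this yields an algebra isomorphism
$$\underline{H}\cong \underline{H}^+\otimes \mathcal{D}(\h^{\underline{W}}),$$
in which $\mathcal{D}(\h^{\underline{W}})$ is the Weyl algebra generated inside $\underline{H}$ by $S(\h^{\underline{W}})$ and $S((\h^{\underline{W}})^*)$ and commutes with $\underline{H}^+$.

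Under this decomposition, $S(\h)^{\underline{W}}$ splits as a tensor product of commuting subrings $S(\h_{\underline{W}})^{\underline{W}}\otimes S(\h^{\underline{W}})$. Since $\lambda\in (\h^*)^{\underline{W}}=(\h^{\underline{W}})^*$ vanishes on $\h_{\underline{W}}$, an object of $\widetilde{\underline{\Ocat}}^\lambda$ is precisely an $\underline{H}$-module on which $\h^{\underline{W}}$ acts with generalized eigenvalue $\lambda|_{\h^{\underline{W}}}$ and $S(\h_{\underline{W}})^{\underline{W}}$ acts with generalized eigencharacter $0$. By the integrality of $S(\h_{\underline{W}})$ over $S(\h_{\underline{W}})^{\underline{W}}$, the latter condition is equivalent to $\h_{\underline{W}}$ acting locally nilpotently, which is the defining condition of $\widetilde{\underline{\Ocat}}^+$.

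It now suffices to invoke the classical Weyl-algebra equivalence: for any $\mu\in (\h^{\underline{W}})^*$, the functor $P\mapsto \bigcap_{a\in \h^{\underline{W}}}\ker(\underline{y}_a-\langle \mu,a\rangle)$ is an equivalence from $\mathcal{D}(\h^{\underline{W}})$-modules with generalized $\h^{\underline{W}}$-eigenvalue $\mu$ to vector spaces, with quasi-inverse $V\mapsto \mathcal{D}(\h^{\underline{W}})\otimes_{S(\h^{\underline{W}})}V$ (where $S(\h^{\underline{W}})$ acts on $V$ through $\mu$). Tensoring this equivalence with the identity on $\underline{H}^+$-$\operatorname{Mod}$ -- legitimate because the two subalgebras commute inside $\underline{H}$ -- is exactly the functor $\zeta_\lambda$, with quasi-inverse
$$M\mapsto \underline{H}\otimes_{\underline{H}^+\otimes S(\h^{\underline{W}})}M_\lambda,$$
where $M_\lambda$ denotes $M$ equipped with $S(\h^{\underline{W}})$ acting through $\lambda$. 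The main obstacle is really only the algebra decomposition of the first paragraph; once it is in place, checking that the unit and counit of adjunction are isomorphisms reduces to the Weyl-algebra fact that every such module is free over $S((\h^{\underline{W}})^*)$ with its $\mu$-eigenspace as a basis.
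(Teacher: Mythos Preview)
Your argument is correct. The paper omits the proof entirely, labeling it ``easy,'' and the decomposition $\underline{H}\cong \underline{H}^+\otimes \mathcal{D}(\h^{\underline{W}})$ you use is precisely the standard mechanism underlying this lemma (and is implicit elsewhere in the paper, e.g.\ in the proof of Lemma~\ref{Lem:euler1}), so your approach is the intended one.
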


So we set $\res_{\lambda}(M):=\zeta_\lambda\circ I^{-1}\circ \tilde{\vartheta}_{\lambda*}\circ E_\lambda (M^\wedge_b)$.

To define $\ind_{\lambda}$ we reverse the procedure. Pick $N\in \underline{\Ocat}^+$. Then,
according to \cite{BE}, Corollary 3.3,  $M_0:=\widetilde{\vartheta}_{\lambda *}^{-1}\circ I\circ \zeta_\lambda^{-1}(N)\in \Ocat^\lambda$.
Set $\ind_{\lambda}(N):=E_0(M_0^{\wedge_0})$.

The functors $\res_{\lambda},\ind_{\lambda}$ were constructed
in \cite{BE}. In fact, their initial definition was quite different,
but \cite{BE}, Proposition 3.13, established an equivalence with the definition
given above. From the initial definition of \cite{BE} it follows that
$\res_{\lambda},\ind_{\lambda}$ are exact, their images lie in $\underline{\Ocat}^+,
\Ocat$, respectively, and $\ind_{\lambda}$ is \underline{left} adjoint to $\res_{\lambda}$.

\subsection{Isomorphisms of completions, II}\label{SUBSECTION_completions2}
In this section we will explain some results from \cite{sraco}.
In \cite{sraco} we worked with the homogenized versions of the algebras. More precisely,
define the $\K[\hb]$-algebra $H_{\hb}$ as the quotient of $T(\h\oplus \h^*)[\hb]\# W$ by the homogeneous
versions of the relations (\ref{eq:relations1}), namely with the third relation replaced with
\begin{equation}\label{eq:rel_homog}
[y,x]=\hb(\langle y,x\rangle-\sum_{s\in S}c(s)\langle \alpha_s,y\rangle\langle \alpha_s^\vee,x\rangle s).
\end{equation}

We can sheafify $H_{\hb}$ over $\h\oplus \h^*/W$, compare with \cite{sraco}, Subsection 2.4,
using the procedure similar to the formal microlocalization. We get a pro-coherent sheaf $\mathcal{H}_{\hb}$ of
$\K[[\hb]]$-algebras on $\h\oplus \h^*/W$.

Similarly, picking a parabolic subgroup $\underline{W}\subset W$
one can define a $\K[\hb]$-algebra $\underline{H}_{\hb}$ and sheafify it over $\h\oplus\h^*/\underline{W}$
to get a sheaf $\underline{\mathcal{H}}_{\hb}$.

Let $\pi:\h\oplus\h^*\rightarrow \h\oplus\h^*/W$ denote the quotient morphism.
Consider the locally closed subvariety of
$\h \oplus\h^*$ consisting of all points $(b,\lambda)$ with
$W_{(b,\lambda)}=\underline{W}$. Let $\Leaf$ denote the image of this
subvariety in $\h\oplus\h^*/W$. Then $\Leaf$ is a symplectic leaf of
the Poisson variety $\h\oplus\h^*/W$.

As in \cite{sraco}, Subsection 2.4, we can define the completion $\mathcal{H}_{\hb}^{\wedge_{\Leaf}}$ of
the sheaf $\mathcal{H}_{\hb}$ along $\Leaf$ and its sheaf-theoretic restriction
$\mathcal{H}_{\hb}^{\wedge_\Leaf}|_{\Leaf}$ to $\Leaf$.

Similarly, we can define the  open subvariety $\underline{\Leaf}\subset \h^{\underline{W}}\oplus \h^{*\underline{W}}
\subset (\h\oplus\h^*)/\underline{W}$ and the completion $\underline{\mathcal{H}}_{\hb}^{\wedge_{\underline{\Leaf}}}|_{\underline{\Leaf}}$. We remark that $\Leaf$
is naturally identified with the quotient $\underline{\Leaf}/\Xi$, where $\Xi:=N_W(\underline{W})/\underline{W}$.

The sheaves we have introduced come equipped with certain group actions.
First of all, let us notice that the 2-dimensional torus $(\K^\times)^2$
acts on $H_{\hb}$: $(z_1,z_2).w=w, (z_1,z_2).x=z_1 x, (z_1,z_2).y=z_2 y, (z_1,z_2).\hb=z_1 z_2 \hb,
w \in W\subset H_{\hb}, x\in \h^*\subset H_{\hb}, y\in \h\subset H_{\hb}$.
This $(\K^\times)^2$-action extends to actions on $\mathcal{H}_{\hb},\mathcal{H}_{\hb}^{\wedge_{\Leaf}},
\mathcal{H}_{\hb}^{\wedge_{\Leaf}}|_{\Leaf}$ by sheaf of algebras automorphisms.

The sheaf $\underline{\mathcal{H}}_{\hb}^{\wedge_{\underline{\Leaf}}}|_{\underline{\Leaf}}$
also carries a similar $(\K^\times)^2$-action. Moreover, $\underline{H}_{\hb}$ is acted on
by $N_W(\underline{W})$ (the action is being induced from the natural $N_W(\underline{W})$-action
on $\h\oplus\h^*$). This action again extends to $\underline{\mathcal{H}}_{\hb}^{\wedge_{\underline{\Leaf}}}|_{\underline{\Leaf}}$.

Consider the sheaf $\CC( \underline{\mathcal{H}}_{\hb}^{\wedge_{\underline{\Leaf}}}|_{\underline{\Leaf}})$
on $\underline{\Leaf}$. There is a natural action of $\Xi$ on this sheaf
by algebra automorphisms, see \cite{sraco}, Subsection 2.3. Let $\rho:\underline{\Leaf}\rightarrow \Leaf$ denote the projection
(i.e., the quotient by $\Xi$). Abusing the notation we write
$\CC( \underline{\mathcal{H}}_{\hb}^{\wedge_{\underline{\Leaf}}}|_{\underline{\Leaf}})^{\Xi}$
instead of $\rho_*(\CC( \underline{\mathcal{H}}_{\hb}^{\wedge_{\underline{\Leaf}}}|_{\underline{\Leaf}}))^{\Xi}$.
This is a sheaf of algebras on $\Leaf$.

So we have constructed  two sheaves of algebras $\mathcal{H}_{\hb}^{\wedge_{\Leaf}}|_{\Leaf},\CC( \underline{\mathcal{H}}_{\hb}^{\wedge_{\underline{\Leaf}}}|_{\underline{\Leaf}})^{\Xi}$
on $\Leaf$. These sheaves are not isomorphic but they become isomorphic if we twist one of them
by a 1-cocycle of inner automorphisms. More precisely, let us fix an open covering
$\bigcup_i U_i$ of $\Leaf$ by $(\K^\times)^2$-stable open affine subvarieties.

\begin{Prop}\label{Prop:iso_compl}
There are $(\K^\times)^2$-equivariant $\K[[\hb]]$-linear isomorphisms
$$\Theta^i: \mathcal{H}_{\hb}^{\wedge_{\Leaf}}|_{\Leaf}(U_i)\rightarrow \CC( \underline{\mathcal{H}}_{\hb}^{\wedge_{\underline{\Leaf}}}|_{\underline{\Leaf}})^{\Xi}(U_i)$$
and $(\K^\times)^2$-invariant elements $$X_{ij}\in \z^{\hb}(\CC( \underline{\mathcal{H}}_{\hb}^{\wedge_{\underline{\Leaf}}}|_{\underline{\Leaf}})^{\Xi})(U_{ij}),$$
where $U_{ij}:=U_i\cap U_j$, such that
\begin{enumerate}
\item Modulo $\hb$ the isomorphism $\Theta^i$ coincides with the natural isomorphism
$\Theta_0: (S(\h\oplus\h^*)\#W)^{\wedge_{\Leaf}}|_{\Leaf}\rightarrow \left(\CC( S(\h\oplus\h^*)\#\underline{W})^{\wedge_{\underline{\Leaf}}}|_{\underline{\Leaf}}\right)^{\Xi}$, see \cite{sraco},
Subsection 2.5.
\item
$\Theta^i=\exp(\ad X_{ij})\Theta^{j}$ for all $i,j$.
\end{enumerate}
\end{Prop}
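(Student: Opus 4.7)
My approach is standard deformation theory: construct each $\Theta^i$ as a quantization of the classical isomorphism $\Theta_0$, and identify the discrepancy on overlaps as the exponential of an inner derivation via a Hochschild cohomology vanishing. Over each $U_i$, I would promote the pointwise isomorphism of Proposition \ref{Prop:iso1} to a sheaf morphism: sheafifying the formulas (\ref{eq:def_iso1}) (with $H$ replaced by the homogenized $H_\hb$) in families over $\rho^{-1}(U_i) \subset \underline{\Leaf}$ is legitimate because the denominators $\underline{x}_{\alpha_s}$ for $s \in S \setminus \underline{W}$ are precisely the coordinate functions that do not vanish on the $\underline{W}$-stratum, hence are invertible sections of the completed structure sheaf restricted to $\underline{\Leaf}$. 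Passing to $\Xi$-invariants through the centralizer construction gives a $(\K^\times)^2$-equivariant, $\K[[\hb]]$-linear homomorphism $\Theta^i$ on $U_i$. The leading $\hb$-term recovers $\Theta_0$, which both proves part (1) and implies that $\Theta^i$ is an isomorphism, since a $\K[[\hb]]$-linear map of $\hb$-adically complete separated sheaves is an isomorphism iff its reduction mod $\hb$ is.

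On $U_{ij}$, the composition $\Phi_{ij} := \Theta^i \circ (\Theta^j)^{-1}$ is then a $(\K^\times)^2$-equivariant $\K[[\hb]]$-linear automorphism of $\CC(\underline{\mathcal{H}}_\hb^{\wedge_{\underline{\Leaf}}}|_{\underline{\Leaf}})^\Xi(U_{ij})$ which is the identity modulo $\hb$. Since the bracket on $\mathcal{H}_\hb$ takes values in $\hb \cdot (\text{algebra})$ by (\ref{eq:rel_homog}), the derivation $D_{ij} := \log \Phi_{ij}$ is $\hb$-adically topologically nilpotent and sends the subsheaf at $\hb$-order $k$ into the subsheaf at order $k+1$. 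I would then argue inductively in $\hb$-order that $D_{ij} = \ad X_{ij}$ for some $X_{ij} \in \z^{\hb}(\CC(\underline{\mathcal{H}}_\hb^{\wedge_{\underline{\Leaf}}}|_{\underline{\Leaf}})^\Xi)(U_{ij})$: the obstruction at each stage lives in a Hochschild $HH^1$-group of the classical sheaf of algebras, which modulo $\hb$ identifies with the first Poisson cohomology of $\Leaf$ and vanishes on a sufficiently small $(\K^\times)^2$-stable affine. Averaging the resulting $X_{ij}$ over the reductive torus makes it $(\K^\times)^2$-invariant without affecting the relation $\Phi_{ij} = \exp(\ad X_{ij})$.

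\emph{Main obstacle.} The crux is the Hochschild-cohomology argument underlying the second step: establishing that every $\hb$-adically trivial $(\K^\times)^2$-equivariant automorphism of the centralizer sheaf over a $(\K^\times)^2$-stable affine $U_{ij}$ is exponentially inner. The classical shadow of this statement is that closed 1-forms on the (contractible, after shrinking) affine $U_{ij}$ are exact; promoting this to the quantum level requires controlling the degeneration of the $\hb$-adic spectral sequence for equivariant Hochschild cohomology of $\CC(\underline{\mathcal{H}}_\hb^{\wedge_{\underline{\Leaf}}}|_{\underline{\Leaf}})^\Xi$, which should be doable using the contracting $(\K^\times)^2$-action and techniques from \cite{sraco}. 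The convergence of $\exp(\ad X_{ij})$ and the $\Xi$-equivariance of all constructions are then routine once the centrality and $\hb$-order-raising properties of $X_{ij}$ are in hand.
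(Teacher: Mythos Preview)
The paper does not actually prove this proposition: immediately after the statement it simply notes that this is a weaker version of \cite[Theorem~2.5.3]{sraco}, with the passage from a single $\K^\times$-action to $(\K^\times)^2$ being straightforward. So your proposal is a direct argument for a result the paper imports wholesale. The broad shape of your argument --- construct local isomorphisms lifting $\Theta_0$, then realize the transition automorphisms as $\exp(\ad X_{ij})$ via an order-by-order argument resting on a Hochschild/Poisson $H^1$ vanishing --- is the standard deformation-theoretic template, and is close in spirit to what \cite{sraco} carries out in the greater generality of symplectic reflection algebras using formal-geometry techniques.

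Two remarks on your specific construction. First, you implicitly need \emph{different} Bezrukavnikov--Etingof formulas on the two charts: the homogenized version (\ref{eq:def_iso_h}) of (\ref{eq:def_iso1}) works on $U_1$, where the $\underline{x}_{\alpha_s}$ for $s\in S\setminus\underline{W}$ are units, but on $U_2$ those denominators vanish and you must instead sheafify the $y$-side formula (\ref{eq:def_iso2}), where the $\underline{y}_{\alpha_s^\vee}$ are units. Your write-up only mentions (\ref{eq:def_iso1}). Second, the $\Theta^i$ you build this way are \emph{not} the ones the paper subsequently uses: Lemma~\ref{Lem:iso_compl1} records precisely that the pointwise specialization $\Theta_b$ of the \cite{sraco} isomorphism $\Theta^1$ differs from $\vartheta_b$ by conjugation by a nontrivial $X\in\K[\h/\underline{W}]^{\wedge_b}$; were your sheafified $\vartheta$ literally equal to $\Theta^1$, that lemma would be vacuous. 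This does not invalidate your proof of the proposition, which is an existence statement, but it means you are producing a different witness, and the element $X_{12}$ used downstream in Subsections~\ref{SUBSECTION_func_def2}--\ref{SUBSECTION_Functors_R} would then refer to your transition data rather than that of \cite{sraco}.
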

Here for a $\K[\hb]$-algebra $A$ by $\z^{\hb}(A)$ we denote the preimage of the center of $A/\hb  A$
in $A$.

This proposition is a weaker version of Theorem 2.5.3 in \cite{sraco} (in fact, in that theorem
we have only $\K^\times$-actions, but the proof extends directly to the sheaves
with $(\K^\times)^2$-actions).

We will apply Proposition \ref{Prop:iso_compl} in the following situation. Let
$U_1:=[(\h^{\underline{W}})^r\times \h^{*\underline{W}}]/\Xi,$ $
U_2:=[\h^{\underline{W}}\times (\h^{*\underline{W}})^r]/\Xi
\subset \Leaf$, where $(\h^{\underline{W}})^r, (\h^{*\underline{W}})^r$ denote the open
subsets of all points with stabilizer exactly $\underline{W}$. Consider the algebra $\mathcal{H}_\hb^{\wedge_{\Leaf}}|_{\Leaf}(U_1)$.
We can complete $H_{\hb}$ at $b$: take the ideal $\m_{b}\subset H_{\hb}$,
compare with \cite{sraco}, Subsection 1.2, and  set $H_{\hb}^{\wedge_b}:=\varprojlim_{n\rightarrow\infty}
H_{\hb}/\m_b^n$. In fact, the natural homomorphism $H_{\hb}\rightarrow H_{\hb}^{\wedge_b}$
factors through $\mathcal{H}_\hb^{\wedge_{\Leaf}}|_{\Leaf}(U_1)$. Moreover, $H_{\hb}^{\wedge_b}$
is the completion of $\mathcal{H}^{\wedge_{\Leaf}}|_{\Leaf}(U_1)$ with respect to the ideal
analogous to $\m_b\subset H_\hb$, compare with \cite{sraco}, Subsection 2.5.

A similar construction works for $\underline{H}_{\hb}$ so we get the completion $\underline{H}_{\hb}^{\wedge_b}$. We conclude that $\Theta^1$ induces an isomorphism
$\Theta_b:H_{\hb}^{\wedge_b}\xrightarrow{\sim}\CC(\underline{H}_{\hb}^{\wedge_b})$.
We remark that this isomorphism is equivariant with respect to the
second copy of $\K^\times$ in $(\K^\times)^2$ (the one acting on $y$'s).

Note however, that we can produce an isomorphism $H_{\hb}^{\wedge_b}\rightarrow \CC( \underline{H}_{\hb}^{\wedge_0})$ by taking a homogeneous version of $\vartheta_{b}$. Namely,
define $\vartheta_{b}$ on the generators of $H_{\hb}$ by
\begin{equation}\label{eq:def_iso_h}
\begin{split}
& [\vartheta_{b}(u)f](w)=f(wu),\\
& [\vartheta_{b}(x_\alpha)f](w)= \underline{x}_{w\alpha} f(w),\\
& [\vartheta_{b}(y_a)f](w)=\underline{y}_{wa}f(w)+\hb\sum_{s\in S\setminus W_b}\frac{2c_s}{1-\lambda_s}\frac{\alpha_s(wa)}{\underline{x}_{\alpha_s}}(f(sw)-f(w)).
\end{split}
\end{equation}
Then $\vartheta_{b}$ uniquely extends to a topological algebra isomorphism
$H_{\hb}^{\wedge_b}\rightarrow \CC( \underline{H}_{\hb}^{\wedge_b})$. We remark that
$\vartheta_{b}$ is also $\K^\times$-equivariant.

\begin{Lem}\label{Lem:iso_compl1}
There is an invertible element $X\in \K[\h/\underline{W}]^{\wedge_b}$
such that $\Theta_{b}=\Ad(X)\circ \vartheta_b$.
\end{Lem}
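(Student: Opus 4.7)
The plan is to compare the two isomorphisms and isolate their difference as an inner automorphism, then use a $\K^\times$-weight argument to force the conjugating element into $\K[\h/\underline{W}]^{\wedge_b}$.

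First, I would note that both $\vartheta_b$ and $\Theta_b$ are continuous $\K[[\hb]]$-algebra isomorphisms $H_\hb^{\wedge_b}\xrightarrow{\sim}\CC(\underline{H}_\hb^{\wedge_b})$, both equivariant for the second $\K^\times$ in $(\K^\times)^2$ (which scales $\h$-generators and $\hb$ with weight $1$, and $\h^*$- and $W$-generators with weight $0$): for $\vartheta_b$ this is visible from (\ref{eq:def_iso_h}), and for $\Theta_b$ it is recorded just after Proposition \ref{Prop:iso_compl}. Both also reduce modulo $\hb$ to the same natural classical isomorphism $\Theta_0$: for $\vartheta_b$, because the deformation term in $\vartheta_b(y_a)$ carries an explicit factor $\hb$; for $\Theta_b$, by Proposition \ref{Prop:iso_compl}(1). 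Therefore $\psi := \Theta_b\circ \vartheta_b^{-1}$ is a $\K^\times$-equivariant continuous automorphism of $\CC(\underline{H}_\hb^{\wedge_b})$ which is the identity modulo $\hb$.

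Next I would invoke the uniqueness-up-to-inner-automorphism mechanism underlying Proposition \ref{Prop:iso_compl}(2) (i.e., the Hochschild cohomology argument in \cite{sraco}, Theorem 2.5.3), adapted to the completion at the single point $b$: any two such $\K^\times$-equivariant lifts of $\Theta_0$ differ by $\Ad(\exp Y)$ for some $Y\in \z^\hb(\CC(\underline{H}_\hb^{\wedge_b}))$, and averaging over $\K^\times$ lets us assume $Y$ is $\K^\times$-invariant (weight $0$). Concretely this is an order-by-order argument: if $\psi\equiv \Ad(\exp Y_n)\pmod{\hb^{n+1}}$, the obstruction at the next order is a derivation of the classical algebra which must be inner, allowing an adjustment to $\psi\equiv \Ad(\exp Y_{n+1})\pmod{\hb^{n+2}}$.

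Finally I would use the weight grading to pin down $Y$. Since the generators of $\CC(\underline{H}_\hb^{\wedge_b})$ all have $\K^\times$-weight $\geq 0$, a weight-$0$ element is automatically $\hb$-independent; combined with $Y\in \z^\hb$, this forces $Y$ to lie in the center $Z(\CC(\underline{H}_0^{\wedge_b}))$. By the Morita equivalence $\CC(A)\sim A$, this center coincides with $Z(\underline{H}_0^{\wedge_b}) = (S(\h\oplus\h^*)^{\underline{W}})^{\wedge_b}$, whose weight-$0$ part is exactly $\K[\h]^{\underline{W},\wedge_b} = \K[\h/\underline{W}]^{\wedge_b}$, diagonally embedded. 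Setting $X := \exp(Y)\in \K[\h/\underline{W}]^{\wedge_b}$ (invertible since its value at $b$ is $e^{Y(b)}\neq 0$), the exponential $\Ad(X) = \exp(\ad Y)$ converges $\hb$-adically because $\ad Y$ is $\hb$-divisible, and $\Theta_b = \Ad(X)\circ \vartheta_b$ follows. The main obstacle will be the second step, namely running the cohomological argument in the formal-completion setting of this lemma rather than in the sheaf-theoretic setting of Proposition \ref{Prop:iso_compl}(2); once the existence of $Y$ is secured, the weight analysis is mechanical.
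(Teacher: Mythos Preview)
The paper's proof is a single citation to \cite{sraco}, Lemma 5.2.1; your proposal is a correct reconstruction of the argument behind that citation, and the strategy you outline---two $\K^\times$-equivariant lifts of $\Theta_0$ differ by $\Ad(\exp Y)$ with $Y\in\z^{\hb}$, then weight zero together with $Y\in\z^{\hb}$ forces $Y\in\K[\h/\underline{W}]^{\wedge_b}$---matches the approach in \cite{sraco}. You are right that the second step, securing $Y\in\z^{\hb}$, is the substantive one and is exactly what the cited lemma supplies; your phrase ``averaging over $\K^\times$'' should be read as projection onto weight zero, which is legitimate here since the action is rational.
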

\begin{proof}
This follows from \cite{sraco}, Lemma 5.2.1.
\end{proof}

Being $\K^\times$-equivariant both $\Theta_{b}, \vartheta_{b}$ restrict to
isomorphisms between the subalgebras in $H_{\hb}^{\wedge_b}, \CC( \underline{H}^{\wedge_0}_{\hb})$
consisting of all $\K^\times$-finite vectors ("$\K^\times$-finite" means "lying in a finite dimensional
$\K^\times$-stable subspace"). Take the quotient of these subalgebras by $\hb-1$. We get the algebras
$H^{\wedge_b}, \CC(\underline{H}^{\wedge_b})$. Let $\theta_{b}$ denote the isomorphism
of these algebras induced by $\Theta_{b}$.
We still have the equality $\theta_{b}=\Ad(X)\circ\vartheta_{b}$.

Applying the same considerations to $U_2$,  we get an isomorphism $$\widetilde{\theta}_{\lambda}: H^{\wedge_\lambda}\rightarrow \CC(\underline{H}^{\wedge_{\lambda}}).$$
and an invertible element $\widetilde{X}\in \K[\h^*/\underline{W}]^{\wedge_\lambda}$ with $\widetilde{\theta}_{\lambda}=\Ad(\widetilde{X})\circ\widetilde{\vartheta}_{\lambda}$.



\section{Isomorphism of the restriction functors}\label{SECTION_iso_restrictions}
\subsection{Functors $\Res_{b,\lambda}, \res_{b,\lambda}$}\label{SUBSECTION_func_def2}
Let $b\in \h^{\underline{W}},\lambda\in \h^{*\underline{W}}$.

Suppose $W_b=\underline{W}$.
Let us define a functor $\Res_{b,\lambda}: \Ocat\rightarrow \underline{\Ocat}^+$ by
$$\Res_{b,\lambda}(M)=\zeta_\lambda \circ I^{-1}\circ \underline{E}_\lambda\circ (\theta_b)_*(M^{\wedge_b}).$$
Here the functor $\underline{E}_\lambda$ on the category of $\CC(\underline{H})$-modules is defined
as before using the natural embedding $\K[\h^*]^{\underline{W}}\hookrightarrow \underline{H}^{\underline{W}}\hookrightarrow \CC(\underline{H})$ (see \cite{sraco},
Subsection 2.3).

\begin{Lem}\label{Lem:fun_iso1}
There is an isomorphism $\Res_{b,\lambda}\cong \Res_b$ for all $\lambda$.
\end{Lem}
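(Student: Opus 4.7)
The plan is to chain two natural identifications. First, I would show that the Morita equivalence $I\colon \underline{H}^{\wedge_b}\text{-mod}\to \CC(\underline{H}^{\wedge_b})\text{-mod}$ intertwines the two relevant notions of $\underline{E}_\lambda$. On the $\CC(\underline{H}^{\wedge_b})$-side, $\underline{E}_\lambda$ is defined via the embedding $\K[\h^*]^{\underline{W}}\hookrightarrow \underline{H}^{\underline{W}}\hookrightarrow \CC(\underline{H}^{\wedge_b})$; on the $\underline{H}^{\wedge_b}$-side, $\underline{E}_\lambda$ uses the action of all of $S(\h)=\K[\h^*]$. Since $\K[\h^*]^{\underline{W}}$ lies inside $\underline{H}^{\underline{W}}$ and acts on the bimodule $\Fun_{\underline{W}}(W,\underline{H}^{\wedge_b})$ through its right factor, $I^{-1}$ commutes (up to canonical natural isomorphism) with taking the generalized $\lambda$-eigenspace for the $\K[\h^*]^{\underline{W}}$-action. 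Hence $I^{-1}\circ \underline{E}_\lambda^{\CC}\cong \underline{E}_\lambda^{S(\h)^{\underline{W}}}\circ I^{-1}$.

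Next I would invoke the hypothesis $\lambda\in \h^{*\underline{W}}$: the $\underline{W}$-orbit of $\lambda$ on $\h^*$ is the singleton $\{\lambda\}$, so by the decomposition $S(\h)=S(\h)^{\underline{W}}\otimes_{\K} (\text{harmonics})$ (Chevalley--Shephard--Todd applied to the $\underline{W}$-action on $\h^*$), two characters of $S(\h)$ restrict to the same character of $S(\h)^{\underline{W}}$ iff they agree. Consequently, on any $\underline{H}^{\wedge_b}$-module the generalized $\lambda$-eigenspace for $S(\h)^{\underline{W}}$ coincides with that for $S(\h)$, i.e.\ $\underline{E}_\lambda^{S(\h)^{\underline{W}}}=\underline{E}_\lambda$. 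Combining with the previous paragraph yields
\[
\Res_{b,\lambda}(M)=\zeta_\lambda\circ I^{-1}\circ \underline{E}_\lambda\circ (\theta_b)_*(M^{\wedge_b})\cong \zeta_\lambda\circ \underline{E}_\lambda\circ I^{-1}\circ (\theta_b)_*(M^{\wedge_b}).
\]

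Finally, since $M^{\wedge_b}$ is finitely generated over $\K[\h]^{\wedge_{Wb}}$, the $\underline{H}^{\wedge_b}$-module $N':=I^{-1}\circ (\theta_b)_*(M^{\wedge_b})$ is finitely generated over $\K[\h]^{\wedge_0}$ (via the identification $\underline{H}^{\wedge_b}\cong \underline{H}^{\wedge_0,x}$). So Lemma \ref{Lem:func_iso_easy1} applies and gives a natural isomorphism $\zeta_\lambda\circ\underline{E}_\lambda(N')\cong \zeta_0\circ\underline{E}_0(N')=\Res_b(M)$, completing the proof.

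The only non-formal point is the first step, verifying $I$-equivariance of $\underline{E}_\lambda$; this is essentially a bookkeeping check once one unpacks the Morita bimodule $\Fun_{\underline{W}}(W,\underline{H}^{\wedge_b})$ and the fact that the embedding $\K[\h^*]^{\underline{W}}\hookrightarrow \underline{H}^{\underline{W}}\hookrightarrow \CC(\underline{H}^{\wedge_b})$ is the diagonal/pointwise action. Everything else reduces to the easy Lemma \ref{Lem:func_iso_easy1}, so I expect no further obstruction.
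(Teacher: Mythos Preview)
Your argument covers two of the three steps in the paper's proof, but it omits one genuine point. Recall that $\Res_b$ was defined in Subsection~\ref{SUBSECTION_func_def} using the Bezrukavnikov--Etingof isomorphism $\vartheta_b$ of Proposition~\ref{Prop:iso1}, whereas $\Res_{b,\lambda}$ is defined using the isomorphism $\theta_b$ induced from $\Theta^1$ of Proposition~\ref{Prop:iso_compl}. These are \emph{a priori} different isomorphisms $H^{\wedge_b}\to \CC(\underline{H}^{\wedge_b})$. Your final line ``$\zeta_0\circ\underline{E}_0(N')=\Res_b(M)$'' with $N'=I^{-1}\circ(\theta_b)_*(M^{\wedge_b})$ is therefore not an equality by definition; it requires knowing that $(\theta_b)_*\cong(\vartheta_b)_*$ as functors. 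The paper supplies exactly this via Lemma~\ref{Lem:iso_compl1}: there exists an invertible $X\in\K[\h/\underline{W}]^{\wedge_b}$ with $\theta_b=\Ad(X)\circ\vartheta_b$, so the two push-forwards differ by an inner automorphism and hence are naturally isomorphic.

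Apart from this, your approach matches the paper's: the commutation of $I^{-1}$ with $\underline{E}_\lambda$ (your first paragraph, which the paper states in one clause) followed by Lemma~\ref{Lem:func_iso_easy1} is precisely how the paper reduces $\Res_{b,\lambda}$ to $\Res_{b,0}$. Your Chevalley--Shephard--Todd remark is correct but more than is needed here. To complete the proof, just insert the appeal to Lemma~\ref{Lem:iso_compl1} at the end.
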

\begin{proof}
First of all,  let us remark that $\underline{E}_\lambda$ and $I^{-1}$ commute.
Since $\zeta_\lambda\circ \underline{E}_\lambda$ is isomorphic to $\zeta_0\circ \underline{E}_0$
(Lemma \ref{Lem:func_iso_easy1}), we see that $\Res_{b,\lambda}\cong \Res_{b,0}$.

Recall $X$ from Lemma \ref{Lem:iso_compl1}. The existence of $X$ implies that the functors
$(\theta_b)_*$ and $(\vartheta_b)_*$ between the categories of $H^{\wedge_b}$- and of
$\CC(\underline{H}^{\wedge_b})$-modules are isomorphic. Our claim follows.
\end{proof}

In fact, it will be useful for us to rewrite the definition of $\Res_{b,\lambda}$
a little bit. Namely, for a $H^{\wedge_b}$-module $M$ let $\underline{E}_{\lambda}^{\theta}$ denote
the generalized eigenspace of the algebra $\K[\h^*/\underline{W}]$ with eigenvalue
$\lambda$, where $\K[\h^*/\underline{W}]$ acts on $M$ via $\theta_b^{-1}$. So we have
$$\Res_{b,\lambda}(M)=\zeta_\lambda \circ I^{-1}\circ  (\theta_b)_*\circ \underline{E}_\lambda^\theta(M^{\wedge_b}).$$

The definition of $\res_{b,\lambda}$ is more technical. Let $W_\lambda=\underline{W}$.

Below we will need certain "Euler elements" in $H,\underline{H},\underline{H}^+$, see \cite{GGOR}, Subsection 3.1.
Pick some basis $y_1,\ldots,y_n\in \h\subset H$ and let
$x_1,\ldots,x_n\in \h^*$ be the dual basis. We set $\eu:=\sum_{i=1}^n \frac{1}{2}(x_iy_i+y_i x_i)$.
This element does not depend on the choice of $y_1,\ldots,y_n$, is $W$-invariant
and satisfies the commutation relations $[\eu,x]=x, [\eu,y]=-y, x\in \h^*, y\in \h$.


Similarly, we can introduce the Euler elements $\underline{\eu}\in \underline{H},\underline{\eu}^+\in \underline{H}^+$.

For a topological $H$-module $M$  consider the subspace $M^{\heartsuit}\subset M$, whose elements, by definition,
are sums $\sum_{a\in \K}\sum_{i\geqslant 0}m_{a,i}$,
where
\begin{itemize} \item the first sum is finite,
\item there is $N_a$ such that  $(\eu-a-i)^{N_a}m_{a,i}=0$,
\item  and the sum $\sum_{i\geqslant 0}m_{a,i}$ converges.\end{itemize}
Then $M^{\heartsuit}$ is an $H$-submodule in $M$.
For example, let $M\in \Ocat$. Consider the completion $M^{\wedge_0}$ at 0.
The element $\eu$ acts diagonalizably on any simple module in $\Ocat$.  Since
any object in $\Ocat$ has finite length it follows that $\eu$ acts locally
finitely $M$. From here it is easy to see that $M^{\wedge_0\heartsuit}= M^{\wedge_0}$.

For $M\in \Ocat$ we set
$$\res_{b,\lambda}(M) =\zeta_\lambda\circ  I^{-1}\circ(\widetilde{\theta}_{\lambda})_*\circ E_\lambda (M^{\wedge_b\heartsuit})$$
By construction, the operators $\underline{y}_a$ act  on $I^{-1}\circ(\widetilde{\theta}_{\lambda})_*\circ E_\lambda(M^{\wedge_b\heartsuit})$ with generalized eigenvalue $\lambda$, so
$\res_{b,\lambda}(M)\in \widetilde{\underline{\Ocat}}^+$.
Later we will see that $\res_{b,\lambda}(M)$ is actually in $\underline{\Ocat}^+$.

\begin{Lem}\label{Lem:iso_easy3}
We have $\res_{0,\lambda}=\res_\lambda$.
\end{Lem}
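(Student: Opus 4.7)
The plan is to unwind both functors and match them through three reductions.

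First I would use that any $M \in \Ocat$ has finite length and $\eu$ acts diagonalizably on simples in $\Ocat$, so $\eu$ acts locally finitely on $M$ and hence on $M^{\wedge_0}$. As already noted in the excerpt right before the definition of $\res_{b,\lambda}$, this forces $M^{\wedge_0\heartsuit} = M^{\wedge_0}$, and therefore
\[
\res_{0,\lambda}(M) \;=\; \zeta_\lambda\circ I^{-1}\circ(\widetilde{\theta}_\lambda)_*\circ E_\lambda(M^{\wedge_0}).
\]

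Next, the $\h^*$-analog of Lemma \ref{Lem:iso_compl1} recorded immediately after its proof gives an invertible $\widetilde{X}\in\K[\h^*/\underline{W}]^{\wedge_\lambda}$ with $\widetilde{\theta}_\lambda=\Ad(\widetilde{X})\circ\widetilde{\vartheta}_\lambda$. Conjugation by $\widetilde{X}$ is inner in $\CC(\underline{H}^{\wedge_\lambda})$, so it provides a natural isomorphism $(\widetilde{\theta}_\lambda)_*\cong\widetilde{\vartheta}_{\lambda*}$ between the two push-forward functors on $H^{\wedge_\lambda}$-modules. Composed with the previous step, this reduces the lemma to a natural identification $E_\lambda(M^{\wedge_0})\cong E_\lambda(M^{\wedge_b})$ of $H^{\wedge_\lambda}$-modules, where $b\in\h$ with $W_b=\underline{W}$ is the point used in the definition of $\res_\lambda$.

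This last step is where the real content lies, and is the main obstacle, because $M^{\wedge_0}$ and $M^{\wedge_b}$ are distinct completions of $M$ in the $\h$-direction with no a priori morphism between them. The idea of the fix is that $E_\lambda$ is a localization in the \emph{orthogonal} direction: it extracts the generalized eigenspace at the maximal ideal $\m_\lambda\subset S(\h)^W$ in the $\h^*$-side of $H$, and so it commutes with any completion performed purely in the $\h$-direction. Concretely, I would introduce an auxiliary common completion $M^{\wedge_{\{0\}\cup Wb}}$ of $M$ at the finite subset $\{0\}\cup Wb\subset\h$ and verify that the obvious inclusions $M^{\wedge_0}\hookrightarrow M^{\wedge_{\{0\}\cup Wb}}$ and $M^{\wedge_b}\hookrightarrow M^{\wedge_{\{0\}\cup Wb}}$ each induce isomorphisms on $E_\lambda$; the orthogonal pieces do not contribute to the generalized $\lambda$-eigenspace because the $\m_\lambda$-torsion is detected entirely by the $\h^*$-action, which commutes with the tensor factorization of these completions. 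This is essentially the content of the equivalence of the two definitions of $\res_\lambda$ in \cite{BE}, Proposition~3.13, which one could simply invoke. Putting the three reductions together yields the desired natural isomorphism $\res_{0,\lambda}(M)\cong\res_\lambda(M)$.
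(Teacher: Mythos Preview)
Your first two reductions are exactly the paper's proof: the paper invokes precisely $M^{\wedge_0\heartsuit}=M^{\wedge_0}$ and the element $\widetilde{X}$ giving $(\widetilde{\theta}_\lambda)_*\cong(\widetilde{\vartheta}_\lambda)_*$, and stops there.

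Your third step is based on reading the definition of $\res_\lambda$ in Subsection~\ref{SUBSECTION_func_def} as using the completion $M^{\wedge_b}$ at a point $b$ with $W_b=\underline{W}$. That is what is literally written there, but it appears to be a slip: the two-line proof the paper gives for this lemma, the companion definition of $\ind_\lambda$ (which uses $M_0^{\wedge_0}$), and the whole architecture of Section~\ref{SECTION_iso_restrictions} (where the passage from completion at $0$ to completion at $b$ is precisely the nontrivial content of Subsection~\ref{SUBSECTION_embedding3}) all indicate that $\res_\lambda$ is meant to be built from $M^{\wedge_0}$. With that reading your third step is unnecessary and the lemma is immediate from the first two.

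If one insists on the literal definition with $M^{\wedge_b}$, then your argument for the third step does not work. The completion $M^{\wedge_{\{0\}\cup Wb}}$ splits as the direct sum $M^{\wedge_0}\oplus M^{\wedge_b}$ (the supports are disjoint), so the inclusions you describe are just summand inclusions and cannot induce isomorphisms on $E_\lambda$ unless one summand has trivial $E_\lambda$, which is false in general. The ``orthogonality'' heuristic is misleading: $E_\lambda$ is a generalized eigenspace for $S(\h)^W$, but $S(\h)$ and $\K[\h]$ do not commute in $H$, and there is no reason the $E_\lambda$-part should be insensitive to the $\h$-point at which one completes. Indeed, producing even an \emph{embedding} $E_\lambda(M^{\wedge_0})\hookrightarrow E_\lambda(M^{\wedge_b\heartsuit})$, for projective $M$ and $b$ close to $0$, is the substance of Subsection~\ref{SUBSECTION_embedding3}, and the isomorphism only emerges at the end of Section~\ref{SECTION_iso_restrictions} via a Grothendieck-group comparison.
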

\begin{proof}
This follows from the equality $M^{\wedge_0\heartsuit}=M^{\wedge_0}$ and the existence
of an element $\widetilde{X}\in \K[\h/\underline{W}]^{\wedge_\lambda}$, compare with the proof of
Lemma \ref{Lem:fun_iso1}.
\end{proof}

We remark that $E_\lambda(M)$ coincides with the generalized eigenspace of $S(\h)^{\underline{W}}$
with eigenvalue $\lambda$, where $S(\h)^{\underline{W}}$ acts on $M$ via $\widetilde{\theta}_\lambda^{-1}$.

Below we will show that $\res_{0,\lambda}\hookrightarrow \res_{b,\lambda}$ and $\res_{b,\lambda}\hookrightarrow \Res_b$. The first embedding is established in Subsection \ref{SUBSECTION_embedding3}. The proof is not very complicated,
although it is somewhat unsatisfactory because it works only for the field $\C$ (perhaps it should be possible
to make the same ideas work over an arbitrary algebraically closed field of characteristic 0, but we do not
know how). The embedding $\res_{b,\lambda}\hookrightarrow \Res_{b,\lambda}$ is more complicated.
Let us explain where complications come from.

Basically, we need to produce an embedding
\begin{equation}\label{eq:required_embedding}
(\widetilde{\theta}_\lambda)_*\circ E_{\lambda}\circ (\bullet^{\wedge_b\heartsuit})\hookrightarrow (\theta_{b})_*\circ \underline{E}_\lambda^\theta(\bullet^{\wedge_b})
\end{equation}
of functors  $\Ocat\rightarrow \widetilde{\underline{\Ocat}}^\lambda$.
That is, for $M\in \Ocat$ we need to construct
a functorial embedding $\Upsilon_M:E_\lambda(M^{\wedge_b\heartsuit})\rightarrow M^{\wedge_b}$ such that
$\Upsilon_M(\widetilde{\theta}_\lambda^{-1}(h).m)=\theta_b^{-1}(h).\Upsilon_M(m)$
for all $h\in \CC(\underline{H})$.

Recall the notation used in Subsection \ref{SUBSECTION_completions2}, and in particular,
isomorphisms
$$\Theta^i: \mathcal{H}_{\hb}^{\wedge_{\Leaf}}|_{\Leaf}\rightarrow \CC( \underline{\mathcal{H}}_{\hb}^{\wedge_{\underline{\Leaf}}}|_{\underline{\Leaf}})^{\Xi}(U_i), i=1,2,$$
and an element
$$X^{12}\in \z^\hb(\CC( \underline{\mathcal{H}}_{\hb}^{\wedge_{\underline{\Leaf}}}|_{\underline{\Leaf}})^{\Xi})(U_{12}),$$
with $\Theta^1=\exp(\ad X_{12})\Theta^{2}$.

Our goal will be to produce $\Upsilon_M$ from $\exp((\Theta^2)^{-1}(X_{12}))$.
A rough idea here is to make $\exp((\Theta^2)^{-1}(X_{12}))$ act on $E_{\lambda}(M^{\wedge_b\heartsuit})$
by "setting $\hb=1$". However, it is unclear why the infinite sum $\exp((\Theta^2)^{-1}(X_{12}))m$
has to converge for any $m\in E_{\lambda}(M^{\wedge_b\heartsuit})$.
In fact, we can make the sum to converge but we need to change our setting for this.
Namely, we will replace $\K$ with the field $R:=\K[t^{-1},t]]$ of formal Laurent series and a point
$(b,\lambda)$ with $(b,\lambda/t)$. In the next subsection we will see that the required sum
converges and define an embedding similar to (\ref{eq:required_embedding}). Then we will introduce
functors $\res_{b,\lambda/t},\Res_{b,\lambda/t}$ and establish
an embedding $\res_{b,\lambda/t}\hookrightarrow \Res_{b,\lambda/t}\cong \Res_{b,0/t}$. Next, in Subsection \ref{SUBSECTION_embedding2} we will see that the embedding $\res_{b,\lambda/t}\hookrightarrow \Res_{b,0/t}$
gives rise to an embedding $\res_{b,\lambda}\hookrightarrow \Res_{b,0}$.

\subsection{Functors $\Res_{b,\lambda/t},\res_{b,\lambda/t}$}\label{SUBSECTION_Functors_R}
Set $R:=\K[t^{-1},t]]$.
Consider the $R$-algebra $R[\h^*/W]:=R\otimes_\K \K[\h^*/W]$. It has a maximal
ideal $\m_{\lambda/t}$ corresponding to $\lambda/t$, so we can form the completion $R[\h^*/W]^{\wedge_{\lambda/t}}$
with respect to this ideal.
Consider the algebras $H_R:=R\otimes H,\underline{H}_R$ and the sheaves
$\mathcal{H}_{R,\hb}^{\wedge_{\Leaf}}|_{\Leaf}$, etc. The isomorphisms
$\Theta^1,\Theta^2$ naturally extend to isomorphisms of the algebras of sections
of the corresponding sheaves. Now form the algebras $H_R^{\wedge_{\lambda/t}},
\underline{H}_R^{\wedge_{\lambda/t}}$ similarly to the above. The isomorphism $\Theta_2$
induces an isomorphism $\widetilde{\theta}_{\lambda/t}: H_R^{\wedge_{\lambda/t}}\rightarrow
\CC(\underline{H}_R^{\lambda/t})$. Similarly, we have the completions $H_R^{\wedge_b},
\underline{H}_R^{\wedge_b}$ and their isomorphism $\theta_b$ induced by $\Theta^1$.

The algebras considered above come with the "$t$-Euler" derivation $t\frac{d}{dt}$.
Since $\Theta^1,\Theta^2$ are defined over $\K$, we see that they intertwine $t\frac{d}{dt}$.
It follows that $\theta_b,\widetilde{\theta}_{\lambda/t}$ also intertwine these derivations.

Now let $M\in \Ocat$. Consider the $H_R$-module $M[t^{-1},t]]$ and its completion
$M[t^{-1},t]]^{\wedge_b}$ in the $\m_b$-adic topology, where we view $\m_b$ as a maximal ideal
in $R[\h/W]$. We equip $M[t^{-1},t]]^{\wedge_b}$ with a topology taking $U_{k,l}:=t^k M^{\wedge_b}[[t]]+ \m_b^l M[t^{-1},t]], k\in \Z, l\in \Z_{\geqslant 0}$ for the fundamental system of neighborhoods of 0. In other words, a sequence $m_i$ of elements in $M[t^{-1},t]]^{\wedge_b}$ converges if the images of $m_i$ in $M[t^{-1},t]]^{\wedge_b}/\m_b^n= M/\m_b^n[t^{-1},t]]$ converge in the $t$-adic topology for all $n$.
We can define the $H_R$-submodule $E_{\lambda/t}(M[t^{-1},t]]^{\wedge_b})$ similarly to the above.
Our goal now will be to produce a certain family of maps $E_{\lambda/t}(M[t^{-1},t]]^{\wedge_b})\rightarrow
M[t^{-1},t]]^{\wedge_b}$.

 Define a derivation $d$ of $H_{R,\hb}$
by $d.w=0, d.x_{\alpha}=0, d.y_\alpha= y_\alpha, d.\hb=\hb, d.t=-t$.
The algebra $H_{R,\hb}$ acts on $M[t^{-1},t]]^{\wedge_b}$ via the homomorphism
$H_{R,\hb}\twoheadrightarrow H_R$ given by $x_\alpha\mapsto x_\alpha, y_a\mapsto
y_a, \hb\mapsto 1, w\mapsto w$.
Consider the ideal $\m$ in $\z^{\hb}(H_{R,\hb})$ corresponding to
the point $(b,\lambda/t)$.
Let $(H_{R,\hb})_{d-fin},\widetilde{\A}$ denote the subalgebras of $d$-finite elements
in $H_{R,\hb},H_{R,\hb}^{\wedge_{b,\lambda/t}}$, where the latter stands for the completion
of $H_{R,\hb}$ with respect to $\m$.

\begin{Prop}\label{Prop:convergence}
For any $m\in E_{\lambda/t}(M[t^{-1},t]]^{\wedge_b})$
the  map $(H_{R,\hb})_{d-fin}\rightarrow
M[t^{-1},t]]^{\wedge_b}), h\mapsto h.m$ extends uniquely to a continuous map
$\widetilde{\A}\rightarrow M[t^{-1},t]]^{\wedge_b}$.
\end{Prop}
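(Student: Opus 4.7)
The plan is to exploit the $d$-weight decomposition of $\widetilde{\A}$ together with the eigenspace hypothesis on $m$. Since $\m$ is $d$-stable -- it is generated by $d$-homogeneous elements, namely $\hb$ (of $d$-weight $1$) and the weight-zero elements $P'_i:=P_i-P_i(b)$ and $Q'_j:=t^{\deg Q_j}Q_j-Q_j(\lambda)=t^{\deg Q_j}(Q_j-Q_j(\lambda/t))$, where $P_i,Q_j$ run through homogeneous generators of $S(\h^*)^W$ and $S(\h)^W$ (note $Q'_j$ is a unit multiple of the naive generator $Q_j-Q_j(\lambda/t)$ since $t\in R^\times$) -- the $d$-grading passes to $H_{R,\hb}^{\wedge_{b,\lambda/t}}$, and $\widetilde{\A}$ is the direct sum of its $d$-weight components. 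It therefore suffices to show, for each $d$-weight $j$ and each basic neighborhood $U_{k',l}$ of $0$ in the target, that there is $K$ for which every $h\in(H_{R,\hb})^{(j)}\cap\m^K$ satisfies $h\cdot m\in\m_b^l\cdot M[t^{-1},t]]^{\wedge_b}\subset U_{k',l}$.

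I would analyze such an $h$ using PBW in $H_{R,\hb}$: reorder any product of $\m$-generators so that $\hb$-factors sit on the left, $Q'$-factors in the middle, and $P'$-factors on the right, with a leftover coefficient $c$ lying in the subalgebra generated by $y,x,w$. Because $\m$ is central modulo $\hb$, each commutator $[Q'_j,P'_i]$ is divisible by $\hb$; this is what guarantees that reordering trades a pair of generators for an $\hb$ times a shorter monomial without disrupting the $d$-weight. The eigenspace hypothesis $(Q_j-Q_j(\lambda/t))^{N_j}\cdot m=0$ (equivalent to $(Q'_j)^{N_j}\cdot m=0$) kills summands with $n_j\geq N_j$, so the surviving $Q'$-patterns satisfy $|\mathbf n|<N:=\sum N_j$. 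The $d$-weight-$j$ constraint forces the $\hb$-count $a$ in each surviving monomial to satisfy $a=j-\text{weight}(c)$, hence $a\leq j+O(1)$; the remaining $P'$-count then satisfies $|\mathbf m|\geq K-|\mathbf n|-a-O(1)\geq K-N-j-O(1)$. Since $(P'_\bullet)^{\mathbf m}$ acts on $M^{\wedge_b}$ by an element of $\m_b^{|\mathbf m|}$, choosing $K$ large enough pushes $|\mathbf m|\geq l$ and the target estimate is satisfied.

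The main obstacle is justifying the above $O(1)$ bounds -- concretely, controlling the PBW coefficient $c$, which a priori can involve arbitrarily long products of $y$-letters. The key input is that $S(\h)$ is a finitely generated free module over $S(\h)^W$, with basis $\xi_1,\ldots,\xi_{|W|}$ say; for each $\xi_i$ one checks $\xi_i\cdot m\in E_{\lambda/t}(M[t^{-1},t]]^{\wedge_b})$ (using that $S(\h)$ is commutative, so $\xi_i$ and $S(\h)^W$ commute). Consequently any $\xi\in S(\h)$ acts on $m$ through an element of the finite-rank $R$-submodule $\sum_i R\cdot(\xi_i m)$, with the coefficients expressible by the eigenspace identity $Q(y)(\xi_i m)=Q(\lambda/t)\xi_i m+(\text{nilpotent})$; after the $d$-weight-preserving $t$-rescaling (forced by weight $j$), the contributions $t^? c\cdot m$ lie in a finite-dimensional subspace depending only on $j$ and on $m$. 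Combined with the $\m_b$-adic estimate $|\mathbf m|\geq K-O(j,N)$, this gives $h\cdot m\in\m_b^l\cdot M[t^{-1},t]]^{\wedge_b}$ for $K\gg 0$; uniqueness of the continuous extension is then automatic from density of $(H_{R,\hb})_{d-fin}$ in $\widetilde{\A}$.
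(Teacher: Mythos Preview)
Your overall strategy---fix a $d$-weight, write elements of $\m^K$ in terms of the $d$-homogeneous generators $\hb,\,P'_i,\,Q'_j$, use the nilpotency of the $Q'_j$ on $m$, and then count---is the same as the paper's. The gap is in the step ``$a=j-\operatorname{weight}(c)$, hence $a\le j+O(1)$.'' This ignores the $R$-coefficient: elements of $\m^K$ carry an arbitrary factor $t^q$ (with $d$-weight $-q$), so the true constraint is $a=j+q-\operatorname{weight}(c)$, and $a$ is \emph{not} bounded in terms of $j$. Concretely, $h=t^K\hb^K\in\m^K$ has $d$-weight $0$, yet $a=K$, $|\mathbf m|=0$; its action $h\cdot m=t^Km$ is never in $\m_b^l\,M[t^{-1},t]]^{\wedge_b}$ for $l>0$. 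So your estimate $|\mathbf m|\ge K-O(j,N)$ fails, and the claimed $\m_b$-adic conclusion cannot hold. (The related assertion that ``$t^{?}c\cdot m$ lie in a finite-dimensional subspace'' is also false: the vectors $t^q m$ are linearly independent as $q$ varies.)

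The paper's argument repairs exactly this point by reversing the logic of your Steps 2--3. One does \emph{not} bound the $\hb$-count; instead one first disposes of the case $|\mathbf m|\ge n_2$ via the $\m_b$-adic part of $U_{n_1,n_2}$, and in the remaining case ($|\mathbf n|$ bounded by nilpotency, $|\mathbf m|<n_2$) one deduces that the $\hb$-count $s$ is large. The $d$-weight equation then forces the accompanying $t$-power $q$ to be large, and since (after $\hb\mapsto 1$) the residual factor $f^{(k)}F_t\,g^{(l)}\cdot m$ ranges over a fixed finite-dimensional $\K$-space $V$, one gets $h\cdot m\in t^qV\subset U_{n_1,n_2}$ for $K\gg0$. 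In short, the convergence mechanism is the $t$-adic half of the topology on $M[t^{-1},t]]^{\wedge_b}$, not the $\m_b$-adic half; your argument needs to bring that in.
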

\begin{proof}
We need to show that for all $a\in \Z, n_1,n_2\in \Z_{\geqslant 0}$ there is $n$ such that
$(H_{R,\hb}\m^n)_a. m\subset U_{n_1,n_2}$, where $(H_{R,\hb}\m^n)_a$ denotes the subspace of all elements
$f\in H_{R,\hb}\m^n$ with $d(f)=a f$ (we remark that $\m$ is $d$-stable).
First of all, let us define some $d$-stable filtration
on $H_{R,\hb}$ that is equivalent to $H_{R,\hb}\m^n$. Choose elements $f_1,\ldots, f_k$
generating the ideal of $b$ in $\K[\h]^W$ and elements $g_1,\ldots g_r\in R[\h^*]^W$ generating the ideal
of $\lambda/t$. The latter ideal is $d$-stable, so we may assume that all $g_i$ are eigenvectors for $d$
with some integral eigenvalues $\alpha_1,\ldots,\alpha_r$. The $R$-algebra $\z^{\hb}(H_{R,\hb})$ is finite over its subalgebra
generated by $f_i,g_i, i=1,\ldots,r$ and $\hb$. Let $F_1,\ldots,F_l$ be a finite set of generators
that are eigenvectors for $d$ with eigenvalues, say, $\beta_1,\ldots,\beta_l$.
Then it is easy to see that $H_{R,\hb}\m^n$ is equivalent to the  filtration
$\m_i$  defined as follows:
$$\m_i:=\sum_{j+k+s=i}f_{i_1}\ldots f_{i_j}\operatorname{Span}_R(F_1,\ldots F_l)\hb^{s}g_{i_1'}\ldots g_{i'_k}.$$
Consider a monomial $f:=\hb^{s}\lambda^q f_{i_1}\ldots f_{i_k} F_t g_{i'_1}\ldots g_{i'_l}\in \m_i$
such that $d$ acts on $f$ by $a$. The last condition can be rewritten as $s-q+\beta_t+\sum_{j=1}^l \alpha_{i'_j}=a$.
For sufficiently large $l$, say $l>l_1$, where $l_1$ depends only on $m$, we have $g_{i'_1}\ldots g_{i'_l} m=0$.
So we may assume that $l\leqslant l_1$. Also if $k\geqslant n_2$, then $f_{i_1}\ldots f_{i_k}M[t^{-1},t]]^{\wedge_b}\subset \m_b^{n_2}M[t^{-1},t]]^{\wedge_b}\subset U_{n_1,n_2}$. So we may assume that
$k\leqslant n_2$. This means that $s\geqslant i-n_2-l_1$ and so $q\geqslant i-M$, where $M$ is some constant
depending only on $n_2,l_1$. The $\K$-linear span of all vectors of the form $\lambda^{-q}f.m$ for all monomials
$f$ with $l\leqslant l_1, k\leqslant n_2$ is finite dimensional (recall that $\hb$ acts by 1).
It follows that for sufficiently large $i$ we get $\lambda^q (\lambda^{-q}f.m)\in U_{n_1,n_2}$.
\end{proof}

Set $A_\hb:=(\Theta^2)^{-1}(X_{12})$.  Let us view $A_\hb$ as an element of $\widetilde{\A}$. It is
annihilated by $d$. So it is also annihilated by $t\frac{d}{dt}$ modulo $\m$ and hence lies in $\K$ modulo $\m$. Subtracting the corresponding element of $\K$, we may assume that $A_\hb\in \m$. So the element $\exp(A_\hb)\in \widetilde{A}$ is defined
and is $d$-invariant as well. So it defines a linear map $E_{\lambda/t}(M[t^{-1},t]]^{\wedge_b})\rightarrow
M[t^{-1},t]]^{\wedge_b}$.

Moreover, let $f\in \CC(\underline{H}_{R,\hb})$. Then $f$ is $\underline{d}$-finite,
where $\underline{d}$ is the derivation of $\CC(\underline{H}_{R,\hb})$ defined similarly
to $d$. The isomorphisms $\Theta^1,\Theta^2: H_{R,\hb}^{\wedge_{b,\lambda/t}}\rightarrow
\CC(\underline{H}_{R,\hb}^{\wedge_{b,\lambda/t}})$ both intertwine $d$ and $\underline{d}$.
It follows that $(\Theta^1)^{-1}(f)\exp(A_h)=\exp(A_h)(\Theta^2)^{-1}(f)$ in $H_{R,\hb}^{\wedge_{b,\lambda/t}}$
and so the actions of the two sides on $E_{\lambda/t}(M[t^{-1},t]]^{\wedge_b})$ agree.
But $(\Theta^{-1})(f)$ acts as $\theta_b^{-1}(f_1)$, while $(\Theta^2)^{-1}(f)$
acts as $\widetilde{\theta}_{\lambda/t}^{-1}(f_1)$. Here $f_1$ is the image of $f$ in
$\CC(\underline{H}_R)$. We remark that any $d$-finite element  of $\CC(\underline{H}_R)$
is represented in this form. Set $\Upsilon_{M,t}(m):=\exp(A_h).m$. We conclude that
\begin{equation}\label{eq:map_equality_t}
\theta_b^{-1}(h)\Upsilon_{M,t}(m)=\Upsilon_{M,t}(\widetilde{\theta}_{\lambda/t}^{-1}(h)m),
\end{equation}
for all $d$-finite (and then, automatically, for all) elements of $\CC(\underline{H}_R)$.

So we get the map $\Upsilon_{M,t}: E_{\lambda/t}(M[t^{-1},t]]^{\wedge_b})\rightarrow
M[t^{-1},t]]^{\wedge_b}$. Thanks to (\ref{eq:map_equality_t}), the image of this map
is contained in $\underline{E}_{\lambda/t}^\theta(M[t^{-1},t]]^{\wedge_b})$. We claim that
$\Upsilon_{M,t}$ is a bijection $E_{\lambda/t}(M[t^{-1},t]]^{\wedge_b})\rightarrow
\underline{E}_{\lambda/t}^\theta(M[t^{-1},t]]^{\wedge_b})$. Indeed, analogously to
Proposition \ref{Prop:convergence}, one can prove that the action of
$(\Theta^1)^{-1}(\CC(\underline{H}_{R,\hb})_{\underline{d}-fin})$ on $\underline{E}_{\lambda/t}^\theta(M[t^{-1},t]]^{\wedge_b})$
extends to that of $\widetilde{A}$. Then it is easy to see that the map
$m\mapsto \exp(-A_h).m$ is inverse to $\Upsilon_{M,t}$.

Also we remark that $M[t^{-1},t]]$ comes equipped with an endomorphism $t\frac{d}{dt}$. This endomorphism
extends to $M[t^{-1},t]]^{\wedge_b}$ by continuouty, the extension will be denoted by $\eu_t^{M}$. It is
compatible with the derivation $t\frac{d}{dt}$ of $H_{R,\hb}$ in the sense that $\eu_t^M(fm)=(t\frac{d}{dt}f)m+f\eu_t^M m$
for all $m\in M[t^{-1},t]]^{\wedge_b}, f\in H_{R,\hb}$. It is easy to see that both
$E_{\lambda/t}(M[t^{-1},t]]^{\wedge_b})$ and $\underline{E}_{\lambda/t}^\theta(M[t^{-1},t]]^{\wedge_b})$
are $\eu_t^M$-stable. Since $\frac{d}{dt}A_h=0$, we see that $\Upsilon_{M,t}$ intertwines the operators
$\eu_t^M$.

Now let us define certain functors $\Res_{b,\lambda/t},\res_{b,\lambda/t}: \Ocat\rightarrow \underline{\Ocat}^+_R$. Here $\underline{\Ocat}^+_R$ stands for the category of $\underline{H}_R^+$-modules $N$ equipped
with an operator $\eu_t^N$ subject to the following conditions:
\begin{enumerate}
\item $N$ is finitely generated  over $R[\h_{\underline{W}}]$,
\item the operators $y_a,a\in \h_{\underline{W}}$ act locally nilpotently on $N$,
\item the operator $\eu_t^N$ is compatible with the derivation $t\frac{d}{dt}$ of $\underline{H}_R^+$.
\end{enumerate}
 Then we will establish  isomorphism $\res_{b,\lambda/t}\xrightarrow{\sim} \Res_{b,\lambda/t}\xrightarrow{\sim}\Res_{b,0/t}$.

Let us construct a functor $\Res_{b,\lambda/t}$.

Take a module $M\in \Ocat$. Form the $H_{R}$-module $M[t^{-1},t]]^{\wedge_b}$
and the endomorphism $e_t^M$ of this module.
Consider the $\CC(\underline{H}_R^{\wedge_b})$-module $(\theta_b)_*(M[t^{-1},t]]^{\wedge_b})$.
Set $\underline{\eu}_t^{M,\theta}:=(\theta_b)_*(\eu_t^M)$. Since $\theta_b$ intertwines the derivations
$t\frac{d}{dt}$, we see that $\underline{\eu}_t^{M,\theta}$ is compatible with $t\frac{d}{dt}$.

Consider the subspace
\begin{equation}\label{eq:module} \zeta_{\lambda/t}\circ \underline{E}_{\lambda/t}\circ I^{-1}\circ(\theta_b)_*(M[t^{-1},t]]^{\wedge_b})\end{equation}
in $(\theta_b)_*(M[t^{-1},t]]^{\wedge_b})$. This subspace is $\underline{H}^+_R$-stable.
Also this subspace is preserved by $\underline{\eu}_t^{M+}:=\underline{\eu}_t^{M,\theta}+\lambda/t$. Indeed, (\ref{eq:module}) consists
of all elements that are annihilated by $y_a-\langle a,\lambda\rangle/t$ with $a\in \h^{\underline{W}}$
and by some powers of $y_a$ with $a\in \h_{\underline{W}}$. Both these conditions are
preserved by $\underline{\eu}_t^{M+}$.

Let us check that (\ref{eq:module}) lies in $\underline{\Ocat}_R^+$. For this we will need the following lemma
that is a ramification of Lemma \ref{Lem:func_iso_easy1}. Let $\underline{\Ocat}_R'$ denote the
category of all $\underline{H}_R':=\K[\h][t^{-1},t]]^{\wedge_b}\otimes_{\K[\h]}\underline{H}$-modules
$M'$ that are finitely generated over $\K[\h][t^{-1},t]]^{\wedge_b}$ and come equipped
with an operator $\eu_t^{M'}$ that is compatible with $t\frac{d}{dt}$.

\begin{Lem}\label{Lem:fun_iso21}
The assignment $(M', \eu_t^{M'})\mapsto (\zeta_{\lambda/t}\circ \underline{E}_{\lambda/t}(M'), \eu_t^{+}:=\eu_t^{M'}+\lambda/t)$ defines an equivalence between $\underline{\Ocat}_R'$
and $\underline{\Ocat}_R^+$. This equivalence does not depend on $\lambda$ up to an isomorphism.
\end{Lem}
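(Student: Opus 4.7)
The proof is an $R$-parametric refinement of Lemma~\ref{Lem:func_iso_easy1}, additionally tracking the compatible Euler structures. The structural starting point is that $\underline{W}$ fixes $\h^{\underline{W}}$ pointwise, so the commutators $[\underline{y}_a, x_\alpha]$ for $a\in\h^{\underline{W}}$, $\alpha\in\h^{*\underline{W}}$ involve no Dunkl correction and the subalgebra they generate in $\underline{H}$ is an ordinary Weyl algebra $D(\h^{\underline{W}})$. This yields a tensor factorization $\underline{H}\cong \underline{H}^+\otimes D(\h^{\underline{W}})$, whence $\underline{H}_R'$ is the tensor product of $\underline{H}_R^+$ with a completion $\widehat{D}_R$ of the Weyl algebra in the $\h^{\underline{W}}$-direction at $b$.

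I would construct an inverse functor $G_\lambda\colon\underline{\Ocat}_R^+\to \underline{\Ocat}_R'$ by setting $G_\lambda(N):=\K[\h^{\underline{W}}]^{\wedge_b}[t^{-1},t]]\otimes_R N$, where $\K[\h^{\underline{W}}]^{\wedge_b}$ acts by multiplication on the first factor, $\underline{y}_a$ for $a\in\h^{\underline{W}}$ acts as $\partial_a+\langle\lambda/t,a\rangle$, the subalgebra $\underline{H}_R^+$ acts on $N$, and $\underline{W}$ acts diagonally. The Euler endomorphism $\eu_t^{G_\lambda(N)}$ is taken to be $t\partial_t$ on the coefficient ring plus the obvious extension of $\eu_t^N-\lambda/t$, chosen precisely so that under the assignment $(M',\eu_t^{M'})\mapsto(\zeta_{\lambda/t}\underline{E}_{\lambda/t}(M'),\eu_t^{M'}+\lambda/t)$ the Euler data match. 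Finite generation of $G_\lambda(N)$ over $\K[\h][t^{-1},t]]^{\wedge_b}$ is immediate from finite generation of $N$ over $R[\h_{\underline{W}}]$. Then $F_\lambda\circ G_\lambda\cong\id$ is clear since $\zeta_{\lambda/t}(G_\lambda(N))=1\otimes N$ with the correct Euler data; and $G_\lambda\circ F_\lambda\cong\id$ follows from a Taylor/PBW identification: for $M'\in\underline{\Ocat}_R'$ supported on the $\underline{E}_{\lambda/t}$-summand, the multiplication map $\K[\h^{\underline{W}}]^{\wedge_b}[t^{-1},t]]\otimes_R\zeta_{\lambda/t}(M')\to M'$ is an isomorphism by the standard identification of modules over the completed Weyl algebra with their space of flat sections.

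The $\lambda$-independence is, I expect, the main obstacle. The naive intertwiner between $F_\lambda$ and $F_{\lambda'}$ would be multiplication by $\exp(\langle(\lambda-\lambda')/t,\,\cdot\,\rangle)$ on $\K[\h^{\underline{W}}]^{\wedge_b}$, but this series fails to converge in $R$ because of the negative powers of $t$ in the exponent. The remedy is to use the same mechanism as in Proposition~\ref{Prop:convergence}: the compatibility of $\eu_t^{M'}$ with $t\frac{d}{dt}$ enforces $d$-finiteness in the $t$-direction, which cuts off the divergent tail of the exponential inside each $\eu_t$-eigencomponent, making it act as an honest intertwiner $F_\lambda(M')\xrightarrow{\sim} F_{\lambda'}(M')$. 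The shift $+\lambda/t$ in the definition of $\eu_t^+$ is exactly what makes this intertwiner match the two Euler structures $\eu_t^{M'}+\lambda/t$ and $\eu_t^{M'}+\lambda'/t$, closing the argument.
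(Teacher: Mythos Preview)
Your construction of an explicit inverse $G_\lambda$ via the tensor factorization $\underline{H}\cong\underline{H}^+\otimes D(\h^{\underline{W}})$ is a legitimate route to the equivalence, and more hands-on than the paper's argument, which simply reduces to $\lambda=0$ and then verifies that $\underline{E}_0(M')$ is finitely generated by showing $\underline{\eu}$ acts locally finitely (as in \cite{BE}, Theorem~2.3). Either works; yours is more explicit, the paper's is shorter. One caution: in your $G_\lambda$ you take the coefficient ring to be $\K[\h^{\underline{W}}]^{\wedge_b}[t^{-1},t]]$, i.e.\ you complete at $b$ \emph{before} adjoining the Laurent series in $t$, whereas the paper's ring $\K[\h][t^{-1},t]]^{\wedge_b}$ completes \emph{after}. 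These are not the same topological ring, and the distinction matters for what follows.

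Where your argument goes wrong is in the $\lambda$-independence. You claim that the naive intertwiner $\exp\bigl((x_\lambda-x_{\lambda'})/t\bigr)$ fails to converge, and then propose an elaborate $d$-finiteness mechanism modeled on Proposition~\ref{Prop:convergence}. But the convergence failure is an artifact of your completion order. In the paper's ring $\K[\h][t^{-1},t]]^{\wedge_b}=\varprojlim_n R[\h]/\m_b^n$, set $\ell:=x_\lambda-\langle\lambda,b\rangle\in\m_b$; then
\[
e^{\ell/t}=\sum_{n\ge 0}\frac{\ell^n}{n!\,t^n}
\]
has image in each $R[\h]/\m_b^N$ equal to the \emph{finite} sum $\sum_{n<N}\ell^n/(n!\,t^n)$, so the inverse limit exists. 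Equivalently, the coefficient of each $(x-b)^\alpha$ is a single monomial $t^{-|\alpha|}$, which lies in $R$. (Only the irrelevant scalar prefactor $e^{\langle\lambda,b\rangle/t}$ fails to lie in $R$; drop it.) Multiplication by this unit is an $\underline{H}_R'$-module automorphism of $M'$ sending $\zeta_0\circ\underline{E}_0(M')$ isomorphically onto $\zeta_{\lambda/t}\circ\underline{E}_{\lambda/t}(M')$, which is exactly the paper's one-line argument. Your detour through $\eu_t$-eigencomponents is therefore unnecessary; and as you sketched it, it would not work anyway: $[\eu_t^{M'},x_\lambda/t]=-x_\lambda/t$, so $(x_\lambda/t)^n m$ runs through infinitely many distinct generalized eigenvalues rather than terminating.
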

\begin{proof}
An isomorphism $\zeta_{0}\circ \underline{E}_{0}(M')\rightarrow\zeta_{\lambda/t}\circ \underline{E}_{\lambda/t}(M')$
is given by $m\mapsto e^{\lambda/t}m$ (this map is well-defined by the definition of
$\underline{\Ocat}_R'$). So it remains to prove that $\zeta_0\circ \underline{E}_0(M')$ is a finitely
generated over $R[\h_{\underline{W}}]$. For this it suffices to check that $\underline{E}_0(M')$
is finitely generated over $R[\h]$.  To prove this one first shows that $\underline{\eu}$
acts locally finitely on $\underline{E}_0(M')$. Then the proof of that $\underline{E}_0(M')$
is finitely generated is easy, compare with \cite{BE}, the proof of Theorem 2.3.
\end{proof}

So (\ref{eq:module}) is indeed an object of $\underline{\Ocat}_R^+$. By $\Res_{b,\lambda/t}$
we denote a functor that assigns (\ref{eq:module}) to $M$. By Lemma \ref{Lem:fun_iso21},
this functor does not depend on $\lambda$.

Now let us proceed to defining the functor $\res_{b,\lambda/t}$. Again, we consider the $H_R$-module
$M[t^{-1},t]]$ and then its completion $M[t^{-1},t]]^{\wedge_b}$. Consider the $H_R$-submodule
$E_{\lambda/t}(M[t^{-1},t]]^{\wedge_b})$ of $M[t^{-1},t]]^{\wedge_b}$. It is
straightforward to see that this module is stable under $\eu_t^M$.
Recall that $\tilde{\theta}_{\lambda/t}$ intertwines the derivations $t\frac{d}{dt}$.
We have the operator $\underline{\tilde{\eu}}_t^{M}:=(\tilde{\theta}_{\lambda/t})_*(\eu_t^M)$ on $(\widetilde{\theta}_{\lambda/t})_*(E_{\lambda/t}(M[t^{-1},]]^{\wedge_b}))$
compatible with $t\frac{d}{dt}$.

Consider the subspace
\begin{equation}\label{eq:module2}
\zeta_{\lambda/t}\circ I^{-1}\circ(\widetilde{\theta}_{\lambda/t})_*(E_{\lambda/t}(M[t^{-1},]]^{\wedge_b}))
\end{equation}
in $(\widetilde{\theta}_{\lambda/t})_*(E_{\lambda/t}(M[t^{-1},]]^{\wedge_b}))$.
It comes equipped with the operator $\underline{\tilde{\eu}}_t^{+M}$ defined similarly
to $\underline{\eu}^{+M}_t$. Then we can define $\res_{b,\lambda/t}$ similarly
to $\Res_{b,\lambda/t}$.

Recall that we have the isomorphism $$\Upsilon_{M,t}: (\widetilde{\theta}_{\lambda/t})_*(E_{\lambda/t}(M[t^{-1},]]^{\wedge_b}))\rightarrow
\underline{E}_{\lambda/t}\circ(\theta_b)_*(M[t^{-1},t]]^{\wedge_b}).$$
By the construction this isomorphism  intertwines the operators
$\underline{\eu}_t^{+M},\underline{\tilde{\eu}}_t^{M+}$.
Therefore it induces an isomorphism $\res_{b,\lambda/t}\xrightarrow{\sim} \Res_{b,\lambda/t}$.

Our conclusion is that $\res_{b,\lambda/t}\xrightarrow{\sim}\Res_{b,0/t}$.

\subsection{An embedding $\res_{b,\lambda}\hookrightarrow \Res_{b,0}$}\label{SUBSECTION_embedding2}
First of all, let us relate $\Res_{b,0/t}$ and $\Res_{b,0}$.
We remark that $M^{\wedge_b}$ is nothing else as the 0-eigenspace for $\eu^M_t$ in $M[t^{-1},t]]^{\wedge_b}$.
From here, tracking the constructions of $\Res_{b,0/t},\Res_{b,0}$, we see that
$\Res_{b,0}(M)$ is the 0-eigenspace of $\underline{\eu}_t^{+M}$ in (\ref{eq:module}).
The latter subspace is $\underline{H}_R^+$-stable.
Moreover, it is easy to see that $\underline{E}_0\circ (\theta_b)_*(M[t^{-1},t]]^{\wedge_b})=
R\otimes \underline{E}_0\circ (\theta_b)_*(M^{\wedge_b})$.
Therefore $\Res_{b,0/t}(M)=R\otimes \Res_{b,0}(M)$.
In particular, $\Res_{b,0}(M)=\Res_{b,0/t}(M)_{fin}/(t-1)$,
where the subscript ``$fin$'' denotes the subspace of all $\eu_{t}^{+M}$-finite elements.

So we need to produce a functorial embedding $\res_{b,\lambda}(M)\hookrightarrow \res_{b,\lambda/t}(M)_{fin}/(t-1)$.
For this we will need to technical lemmas concerning Euler elements.

\begin{Lem}\label{Lem:el_t}
 $\Theta^2(\eu)-\underline{\eu}\in \K \hb$.
\end{Lem}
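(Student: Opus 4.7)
The plan is to compute $\Theta^2(\eu)$ by factoring $\Theta^2$ through the explicit isomorphism $\widetilde{\vartheta}_{\lambda,\hb}$ and controlling the discrepancy using $(\K^\times)^2$-equivariance.

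I would first compute $\widetilde{\vartheta}_{\lambda,\hb}(\eu)$ directly from the homogenized form of (\ref{eq:def_iso2}). Writing $\eu=\sum_i\tfrac12(x_iy_i+y_ix_i)$ with dual bases $\{y_i\}\subset\h$, $\{x_i\}\subset\h^*$, and expanding each summand using $\widetilde{\vartheta}_{\lambda,\hb}(y_a)f(w)=\underline{y}_{wa}f(w)$ together with the reflection-correction in the formula for $\widetilde{\vartheta}_{\lambda,\hb}(x_\alpha)$, one sums over $i$. The dual-basis identities $\sum_i\alpha_s^\vee(wx_i)wy_i=\alpha_s^\vee$ combined with $s\alpha_s^\vee=-\alpha_s^\vee$ force the $f(sw)$-contributions from the $x_iy_i$- and $y_ix_i$-pieces to cancel in pairs, while the $f(w)$-contributions add to a scalar multiple of $\hb$. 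Since $\sum_i(\underline{x}_{wx_i}\underline{y}_{wy_i}+\underline{y}_{wy_i}\underline{x}_{wx_i})=2\underline{\eu}$ (because $\{wx_i\},\{wy_i\}$ remain dual bases), one obtains
\[
\widetilde{\vartheta}_{\lambda,\hb}(\eu)=\underline{\eu}+2\hb\sum_{s\in S\setminus\underline{W}}\frac{c_s}{1-\lambda_s^{-1}}\in\underline{\eu}+\K\hb.
\]

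Next, the homogenized analog of Lemma \ref{Lem:iso_compl1} for $U_2$ (which follows from Proposition \ref{Prop:iso_compl} in the same way as Lemma \ref{Lem:iso_compl1} follows for $U_1$) produces an invertible element $\widetilde{X}_\hb\in\z^\hb(\CC(\underline{\mathcal{H}}_\hb^{\wedge_{\underline{\Leaf}}}|_{\underline{\Leaf}})^\Xi)(U_2)$ with $\Theta^2=\Ad(\widetilde{X}_\hb)\circ\widetilde{\vartheta}_{\lambda,\hb}$. Since both $\Theta^2$ and $\widetilde{\vartheta}_{\lambda,\hb}$ are $(\K^\times)^2$-equivariant, so is $\Ad(\widetilde{X}_\hb)$; after renormalizing by a central scalar, $\widetilde{X}_\hb$ may be taken $(\K^\times)^2$-homogeneous, with some Euler $\K^\times$-weight $k\in\Z$. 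The commutation relation $[\underline{\eu},\widetilde{X}_\hb]=\hb k\widetilde{X}_\hb$ then gives $\Ad(\widetilde{X}_\hb)(\underline{\eu})=\underline{\eu}-\hb k$, and combining with the previous computation yields
\[
\Theta^2(\eu)=\Ad(\widetilde{X}_\hb)\Bigl(\underline{\eu}+2\hb\sum_s\frac{c_s}{1-\lambda_s^{-1}}\Bigr)=\underline{\eu}+\hb\Bigl(2\sum_{s\in S\setminus\underline{W}}\frac{c_s}{1-\lambda_s^{-1}}-k\Bigr)\in\underline{\eu}+\K\hb.
\]

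The main obstacle is the bookkeeping in the direct computation, where one must carefully match the reflection terms coming from $x_iy_i$ and $y_ix_i$ and verify the cancellation of the $f(sw)$-contributions via the dual-basis identities together with the $s$-antisymmetry $s\alpha_s^\vee=-\alpha_s^\vee$.
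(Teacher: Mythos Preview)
Your approach has two genuine gaps.

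First, the identity $s\alpha_s^\vee=-\alpha_s^\vee$ you invoke to force the $f(sw)$-cancellation holds only for reflections of order~$2$. For a general complex reflection $s$ with nontrivial eigenvalue $\lambda_s$ one has $s\alpha_s^\vee=\lambda_s\alpha_s^\vee$, and summing over $i$ the $f(sw)$-contributions from $x_iy_i$ and $y_ix_i$ gives the coefficient $-\hb\,c_s(1+\lambda_s)/(1-\lambda_s^{-1})$, which is nonzero whenever $\lambda_s\neq-1$. So $\widetilde{\vartheta}_{\lambda,\hb}(\eu)-\underline{\eu}$ is $\hb$ times an element involving the left-translation operators $L_s:f\mapsto f(s\,\cdot)$, not a scalar.

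Second, the step ``after renormalizing by a central scalar, $\widetilde{X}_\hb$ may be taken $(\K^\times)^2$-homogeneous'' is not justified and in fact cannot hold. The explicit isomorphism $\widetilde{\vartheta}_{\lambda,\hb}$ is attached to a fixed point $\lambda$, which is moved by the Euler $\K^\times$; hence $\widetilde{\vartheta}_{\lambda,\hb}$ is \emph{not} $(\K^\times)^2$-equivariant, so neither is $\Ad(\widetilde{X}_\hb)=\Theta^2\circ\widetilde{\vartheta}_{\lambda,\hb}^{-1}$, and $\widetilde{X}_\hb$ cannot be chosen homogeneous. Worse, since $\widetilde{X}_\hb$ lies in (the completion of) $\K[\h^*/\underline{W}]$ it commutes with each $L_s$, so conjugation by $\widetilde{X}_\hb$ cannot remove the $L_s$-terms produced in the first step.

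The paper's argument sidesteps both issues by never passing through $\widetilde{\vartheta}_\lambda$. It uses the $(\K^\times)^2$-equivariance of $\Theta^2$ directly: equivariance for the Euler $\K^\times$ gives $[\Theta^2(\eu),\cdot]=[\underline{\eu},\cdot]$, so the difference is central, hence lies in $\K[[\hb]]$ (the center of $\CC(\underline{H}_\hb^{\wedge})$ being $\K[[\hb]]$); equivariance for the grading $\K^\times$ then pins the degree and forces $\Theta^2(\eu)-\underline{\eu}\in\K\hb$.
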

\begin{proof}
The center of $\underline{H}_\hb^{\wedge_0,x}$ coincides with $\K[[\hb]]$,
this follows easily from the claim (see \cite{BG}) that the center of $\underline{H}$ is $\K$.
Therefore the centers of $\underline{H}_\hb^{\wedge_b}$ and $\CC(\underline{H}_{\hb}^{\wedge_b})$ coincide
with $\K[[\hb]]$. Let us show that $[\Theta^2(\eu),\cdot]=[\underline{\eu},\cdot]$. The derivation $[\eu,\cdot]$
is the image of $1\in \K$ under the $\K^\times$-action on $H_\hb$ given by $z.x=zx, z.y=z^{-1}y, z.w=w, z.\hb=\hb$.
A similar claim holds for $\underline{\eu}$. The required equality follows from the claim that $\Theta^2$ intertwines
the corresponding $\K^\times$-actions. Now consider the $\K^\times$-actions induced by the gradings on $H,\underline{H}$.
They are also intertwined by $\Theta^2$. Since both $\eu$ and $\underline{\eu}$ have degree 2 with respect to
these actions, we see that $\Theta^2(\eu)-\underline{\eu}\in \K \hb$.
\end{proof}

Let $\alpha\in \K$ be such that $\Theta^2(\eu)=\underline{\eu}+\alpha\hb$.
It follows that $\widetilde{\theta}_{\lambda/t}(\eu)=\underline{\eu}+\alpha$.

\begin{Lem}\label{Lem:euler1}
Let $M$ be a $\underline{H}$-module. Then $\underline{\eu}$ acts as $\underline{\eu}^++\lambda+\dim \h^{\underline{W}}/2$
on $\z_{\lambda}(M)$.
\end{Lem}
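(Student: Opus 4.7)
The plan is to reduce the claim to a direct commutator calculation in $\underline{H}$ using a basis of $\h$ adapted to the decomposition $\h=\h^{\underline{W}}\oplus\h_{\underline{W}}$. Pick a basis $y_1,\ldots,y_k$ of $\h^{\underline{W}}$, where $k=\dim\h^{\underline{W}}$, and extend it by a basis $y_{k+1},\ldots,y_n$ of $\h_{\underline{W}}$; the dual basis $x_1,\ldots,x_n$ of $\h^*$ then automatically splits so that $x_1,\ldots,x_k$ span $(\h^{\underline{W}})^*$ and $x_{k+1},\ldots,x_n$ span $\h_{\underline{W}}^*$.

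Plugging this basis into the definition $\underline{\eu}=\sum_{i=1}^n\tfrac{1}{2}(x_iy_i+y_ix_i)$ from Subsection \ref{SUBSECTION_func_def2}, I would split the sum at $i=k$. The tail $\sum_{i=k+1}^n\tfrac{1}{2}(x_iy_i+y_ix_i)$ is literally the element $\underline{\eu}^+$ viewed inside $\underline{H}$, since the $x_i,y_i$ for $i>k$ and the reflections of $\underline{W}$ satisfy the same commutation relations in $\underline{H}^+$ and in $\underline{H}$ (the reflection set is $S\cap\underline{W}$ in both algebras).

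For the head $\sum_{i=1}^k\tfrac{1}{2}(x_iy_i+y_ix_i)$ I would invoke the standard fact about parabolic subgroups of complex reflection groups: every $s\in S\cap\underline{W}$ fixes $\h^{\underline{W}}$ pointwise, so $\ker(s-\id)\supset\h^{\underline{W}}$, and therefore $\alpha_s$ annihilates $\h^{\underline{W}}$ while $\alpha_s^\vee\in\operatorname{im}(s-\id)\subset\h_{\underline{W}}$ pairs to zero with every $x_i\in(\h^{\underline{W}})^*$. This kills the entire reflection sum in relation (\ref{eq:relations1}) whenever $y\in\h^{\underline{W}}$ and $x\in(\h^{\underline{W}})^*$, so for $i\leqslant k$ one has $[y_i,x_i]=\langle y_i,x_i\rangle=1$, hence $\tfrac{1}{2}(x_iy_i+y_ix_i)=x_iy_i+\tfrac{1}{2}$. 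Summing gives the identity
\begin{equation*}
\underline{\eu} \;=\; \underline{\eu}^+ + \sum_{i=1}^k x_i y_i + \frac{\dim\h^{\underline{W}}}{2}
\end{equation*}
inside $\underline{H}$.

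Finally I would apply this identity to a vector $m\in\zeta_\lambda(M)$. By definition of $\zeta_\lambda$, each $y_i$ with $i\leqslant k$ acts on $m$ as the scalar $\langle\lambda,y_i\rangle$, so $\sum_{i=1}^kx_iy_im=\bigl(\sum_{i=1}^k\langle\lambda,y_i\rangle x_i\bigr)m$; and $\sum_{i=1}^k\langle\lambda,y_i\rangle x_i$ is precisely $\lambda$ regarded as an element of $(\h^{\underline{W}})^*\subset\h^*\subset\underline{H}$ (i.e.\ $\lambda$ extended by zero on $\h_{\underline{W}}$), which is the ``$\lambda$'' appearing in the lemma. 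This yields $\underline{\eu}\,m=(\underline{\eu}^++\lambda+\dim\h^{\underline{W}}/2)\,m$ in $M$, which is the assertion. There is no real obstacle here — the whole proof is the commutator computation above; the only conceptual point worth flagging is that the lemma is an equality of vectors in $M$ applied to $m\in\zeta_\lambda(M)$, not a claim that $\underline{\eu}$ or the element $\lambda$ individually preserves $\zeta_\lambda(M)$ (indeed $\lambda$ does not, but the difference $\underline{\eu}-\lambda$ does).
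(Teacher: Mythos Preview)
Your proof is correct and follows essentially the same route as the paper: split $\underline{\eu}$ along a basis adapted to $\h=\h^{\underline{W}}\oplus\h_{\underline{W}}$, identify the $\h_{\underline{W}}$-part with $\underline{\eu}^+$, use that $[y_i,x_i]=1$ for the $\h^{\underline{W}}$-indices (your justification via the vanishing of $\alpha_s$ and $\alpha_s^\vee$ on the invariant pieces makes explicit a step the paper leaves tacit), and then evaluate $\sum x_iy_i$ on $\zeta_\lambda(M)$ as multiplication by $\lambda$. The only difference from the paper is cosmetic---you index the $\h^{\underline{W}}$-basis first rather than last---and your closing remark about $\lambda$ not preserving $\zeta_\lambda(M)$ is a useful clarification of how to read the statement.
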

\begin{proof}
Pick a basis $y_1,\ldots,y_n$ in such a way that $y_1,\ldots,y_k$ is a basis in $\h_{\underline{W}}$,
while $y_{k+1},\ldots,y_n$ is a basis in $\h^{\underline{W}}$. So we see that
$\underline{\eu}=\underline{\eu}^++\sum_{i=k+1}^n \frac{1}{2}(x_i y_i+y_i x_i)=
\underline{\eu}^++\sum_{i=k+1}^n x_i y_i+\frac{n-k}{2}$. The element
$\sum_{i=k+1}^n x_i y_i$ acts by $\sum_{i=k+1}x_i \langle\lambda,y_i\rangle=\lambda$
on $\z_{\lambda}(M)$. Hence our claim.
\end{proof}

Pick a section  $\varphi:\K/\Z\hookrightarrow \K$ of the natural projection $\K\twoheadrightarrow \K/\Z$.
Define an embedding $\iota:M^{\wedge_b\heartsuit}\hookrightarrow M[t^{-1},t]]^{\wedge_b}$ by sending
a sum $\sum_{i\in \Z_{\geqslant 0}}m_{\alpha,i}$ (in the notation of Subsection \ref{SUBSECTION_func_def2})
to $\sum_{i\in \Z_{\geqslant 0}}t^{\varphi(\alpha)-\alpha-i}m_{\alpha,i}$. It is easy to
see that this embedding induces an embedding $\iota:\res_{b,\lambda}(M)\hookrightarrow
\res_{b,\lambda/t}(M)$. Moreover, the last embedding becomes an $\underline{H}^+$-module
homomorphism if we modify the action of $\underline{H}^+$ on the image by $w.\iota(m)=w\iota(m),
x.\iota(m)=t^{-1}x\iota(m), y.\iota(m)=ty\iota(m), w\in \underline{W}, x\in \h^*_{\underline{W}},
y\in \h_{\underline{W}}$. We remark that $\underline{\eu}^++ \underline{\eu}_t^{+M}$
acts locally finitely on $\iota(\res_{b,\lambda}(M))$. Indeed, thanks to Lemmas \ref{Lem:el_t},
\ref{Lem:euler1}, $\underline{\eu}^++\underline{\eu}_t^{+M}$ coincides (up to adding a scalar)
with the operator on $\res_{b,\lambda}(M)$ induced by $\eu+\eu_t^M$. But
if $m=\sum_{i\geqslant 0}m_{\alpha,i}$, then $\iota(m)$ is a generalized eigenvector
for $\eu+\eu_t^M$ with eigenvalue $\varphi(\alpha)$. In particular, since $\underline{\eu}^+$
acts locally finitely on any object in $\underline{\Ocat}_R^+$, we see
that $\iota(\res_{b,\lambda}(M))\subset \res_{b,\lambda/t}(M)_{fin}$.
Now consider the induced map $\K[t^{-1},t]\otimes \iota(\res_{b,\lambda}(M))\rightarrow \res_{b,\lambda/t}(M)_{fin}$.
The kernel of this  map is stable with respect to
$\underline{\eu}^++\underline{\eu}^{+M}_t$ and the action of this operator
on $\K[t^{-1},t]\otimes \iota(\res_{b,\lambda}(M))$ is locally finite.
So let $v$ be an eigenvector in the kernel. But the kernel  is stable under the
multiplication by elements from $\K[t^{-1},t]$ as well. However, it is easy to
see that for an appropriate $k$ we have $t^k v\in \iota(\res_{b,\lambda}(M))$.
Contradiction.


So let us pick $M\in \Ocat(\cf)$. Let us introduce an embedding of $\res_{b,\lambda}(M)
\hookrightarrow \res_{b,\lambda/t}(M)$.

\subsection{An embedding $\res_{0,\lambda}\hookrightarrow \res_{b,\lambda}$}\label{SUBSECTION_embedding3}
Tracking the construction of $\res_{b,\lambda}$ we see that we need to prove the following claim:

\begin{itemize}
\item[(*)] There is a functorial embedding $E_{\lambda}(M^{\wedge_0})\hookrightarrow E_{\lambda}(M^{\wedge_b\heartsuit})$.
\end{itemize}

In fact we will show a weaker result.

\begin{Prop}\label{Prop_emb2}
There is a sufficiently small $W$-stable neighborhood $U$ of zero in $\h$ such that  for all $b\in U$
there is a functorial homomorphism $E_{\lambda}(M^{\wedge_0})\rightarrow E_\lambda(M^{\wedge_b\heartsuit})$
that is injective when $M$ is projective.
\end{Prop}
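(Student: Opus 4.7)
The plan is to construct the required homomorphism via analytic re-expansion, using crucially that $\K=\C$. For $M\in\Ocat$, the Euler element $\eu$ acts locally finitely on $M$ with finite-dimensional generalized eigenspaces $M_\beta$, and $M_\beta\subset\m_0^{\beta-c}M$ for $c$ the minimal $\eu$-eigenvalue on $M$, so $M^{\wedge_0}\cong\prod_\beta M_\beta$ topologically and every $m\in M^{\wedge_0}$ decomposes uniquely as $m=\sum_\beta m_\beta$ with $m_\beta\in M_\beta\subset M$. For $m\in E_\lambda(M^{\wedge_0})$, the eigenvalue conditions $(p-\lambda(p))^N m=0$ for $p\in S(\h)^W$---whose generators have integer Euler weights---restrict the support of the expansion to finitely many cosets of $\Z$ in $\K$, giving $m=\sum_{j=1}^k\sum_{i\geqslant 0}m_{a_j,i}$, already of the shape required by the $\heartsuit$-structure.

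The next step, and the only place where $\K=\C$ will be used, is to lift such an $m$ to an honest analytic section. The eigenvalue equations, viewed as constraints on the sections of the coherent sheaf on $\h$ associated with $M$, have polynomial coefficients with simple poles only along the reflection arrangement, and form a regular holonomic-type system for $m$; a standard regularity argument over $\C$ will then show that any formal power-series solution at $0$ converges on some $W$-stable Euclidean neighborhood $U$ of $0$. Denoting this analytic germ by $\hat m$, for $b\in U$ I would define $\phi_b(m)\in M^{\wedge_b}$ to be the Taylor expansion of $\hat m$ at $b$; its Euler decomposition is the term-by-term re-expansion of the $m_{a_j,i}$, so the inner sums converge $\m_b$-adically and $\phi_b(m)\in M^{\wedge_b\heartsuit}$. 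Continuity of the $H$-action in both completions will preserve the eigenvalue conditions, giving $\phi_b(m)\in E_\lambda(M^{\wedge_b\heartsuit})$ and making $\phi_b$ intertwine the extended $H^{\wedge_\lambda}$-actions. Functoriality in $M$ is manifest.

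For injectivity when $M$ is projective, I would use that any projective in $\Ocat$ is free as a $\K[\h]$-module: its standard filtration has subquotients $\Delta(\tau)=\K[\h]\otimes\tau$ which are themselves free, and the extensions split over $\K[\h]$. Fixing a $\K[\h]$-basis of $M$, $\phi_b$ becomes coordinate-wise the classical Taylor re-expansion of analytic germs of functions, which is injective on $U$ by the identity principle for holomorphic functions on a connected domain. The hard part will be the analytic-regularity step that promotes the formal $m$ to the analytic germ $\hat m$: this is precisely what ties the proof to $\C$ and has no analogue over abstract algebraically closed fields of characteristic zero, matching the paper's own observation.
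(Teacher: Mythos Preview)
Your approach is essentially the paper's. The paper packages the same idea by introducing the intermediary $M(U):=\K[U]\otimes_{\K[\h]}M$ (analytic sections over a convex $W$-stable $U$) and defining the homomorphism as the composite $E_\lambda(M^{\wedge_0})\xleftarrow{\sim}E_\lambda(M(U))\to E_\lambda(M^{\wedge_b\heartsuit})$; your analytic germ $\hat m$ is precisely an element of $E_\lambda(M(U))$, and your Taylor re-expansion at $b$ is the right-hand restriction map. Both arguments hinge on the fact that projectives are free over $\K[\h]$, so that the $y_a$-action is a flat connection with regular singularities along the reflection arrangement and formal solutions at $0$ converge; and both deduce injectivity for projective $M$ from freeness.

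One place where the paper is more careful than your sketch: you invoke a ``standard regularity argument'' for the coherent sheaf attached to an \emph{arbitrary} $M$, but the formal-to-analytic comparison is really a statement about vector bundles with regular singular connection, so it applies cleanly only when $M$ is free over $\K[\h]$. The paper therefore proves $E_\lambda(M(U))\xrightarrow{\sim}E_\lambda(M^{\wedge_0})$ first for projective $M$ via regularity, and then upgrades to all $M$ by an exactness argument: $M\mapsto E_\lambda(M(U))$ is left exact, $M\mapsto E_\lambda(M^{\wedge_0})$ is exact (because $\res_{0,\lambda}$ is), and a natural embedding between such functors that is an isomorphism on projectives is an isomorphism everywhere. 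This reduction also explains why a single $U$ works uniformly for all $M$ (there are only finitely many indecomposable projectives), a point your ``functoriality is manifest'' leaves implicit.
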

\begin{proof}
Let $U$ be a convex $W$-stable neighborhood of $0$ in $\h$.
Set $H(U):=\K[U]\otimes_{\K[\h]} H$, where $\K[U]$
stands for the algebra of analytic functions on $U$. For $M\in \Ocat$
set $M(U):=\K[U]\otimes_{\K[\h]}M$. We can define $E_\lambda(M(U))$ similarly to the above.
We have a natural homomorphism $E_{\lambda}(M(U))\hookrightarrow E_{\lambda}(M^{\wedge_0})$
(restricting a section to the formal neighborhood of 0). 

\begin{Lem}\label{Lem:embedding20}
The natural map $M(U)\rightarrow M^{\wedge_0}$ is an embedding for sufficiently small $U$. 
\end{Lem}
\begin{proof}
Since both $M\mapsto M^{\wedge_0}, M\mapsto M(U)$ are exact functors, 
it is enough to check the claim when $M$ is irreducible. Since there are only finitely many irreducibles,
it is enough to check the claim for any fixed $M$. Here it follows easily from the observation 
that $M(U)$ is a finitely generated and hence Noetherian $\K[U]$-module. 
\end{proof}

Also we have a homomorphism
$E_\lambda(M(U))\rightarrow E_{\lambda}(M^{\wedge_b})$ (the restriction to a formal neighborhood
of $b$) for any $b\in U$. Any section $s$ of $M(U)$ is a sum $\sum_{a\in \K}\sum_{i=0}^\infty m_{a,i}$, where
the meaning of $m_{a,i}$ is the same as in the definition of $\bullet^{\heartsuit}$ in Subsection \ref{SUBSECTION_func_def2},
that converges on $U$. It follows that the image of the restriction embedding $M(U)\hookrightarrow
M^{\wedge_b}$ lies in $M^{\wedge_b\heartsuit}$. So $E_\lambda(M(U))$ actually maps to
$E_{\lambda}(M^{\wedge_b\heartsuit})$.


Now we claim that for sufficiently small $U$ the embedding $E_\lambda(M(U))\rightarrow E_{\lambda}(M^{\wedge_0})$
is actually an isomorphism. The category $\Ocat$ is a length category, has enough projectives, and the number of
projectives is finite. The functor $M\mapsto E_\lambda(M(U))$ is obviously left exact, while the functor
$M\mapsto E_\lambda(M^\wedge_0)$ is exact. Recall that the latter follows 
from the fact that $\res_{0,\lambda}$ is exact, see \cite{BE}, Subsection 3.5. Therefore the embedding $E_\lambda(M(U))\hookrightarrow
E_\lambda(M^{\wedge_0})$ is an isomorphism if and only if it is an isomorphism for any projective $M$.

Now it is known, see \cite{GGOR}, that any projective has a filtration by standard modules
and, in particular, is a free $\K[\h]$-module. The action of $y_a$ gives rise to a flat
$W$-equivariant connection on this bundle.  The connection has regular singularities on the reflection
hyperplanes. Suppose that $v$ is an element of $M^{\wedge_0}$ such that $(y_a-\langle\lambda,a\rangle)v=u'(a)$
for some linear map $u':\h\rightarrow M(U)$. Since the connection given by $a\mapsto y_a$ has regular
singularities, we see that $v$ extends to some smaller neighborhood $U'$ of $0$. But the module
$E_\lambda(M^{\wedge_0})$ is finitely generated. So using an easy induction and shrinking
$U$ if necessary, we see that all generators of $E_\lambda(M^{\wedge_0})$ extend to $U$
proving our claim.

Since any projective module is free over $\K[\h]$, we see that the natural map $M(U)\rightarrow M^{\wedge_b}$
is injective, provided $M$ is projective. 
\end{proof}

\subsection{Completion of the proof}\label{SUBSECTION_proof_completion}
Let us complete the proof of the theorem. We have a homomorphism $$\res_\lambda\cong\res_{0,\lambda}\rightarrow
\res_{b,\lambda}\hookrightarrow \Res_{b,\lambda}\cong  \Res_b$$
for $b$ sufficiently close to 0. But all functors $\Res_b$ are isomorphic, see \cite{BE}, Subsection 3.7, so we have
a homomorphism $\res_{\lambda}\rightarrow \Res_b$ for all $b$. Moreover, $\res_\lambda(M)\hookrightarrow \Res_b(M)$
for any projective $M$.
As Bezrukavnikov and Etingof checked in \cite{BE}, Subsection 3.6, on the level of the Grothendieck
groups the functors $\res_\lambda,\Res_b$ are the same
($K_0(\Ocat)=K_0(W-\operatorname{Mod}), K_0(\underline{\Ocat}^+)=K_0(\underline{W}-\operatorname{Mod})$
and both $\Res_b,\res_\lambda$ produce the restriction map induced by the embedding $\underline{W}\hookrightarrow
W$). So $\res_\lambda(M),\Res_b(M)$ have the same class in the Grothendiecck group for any $M$.
In particular, $\res_\lambda(M)=\Res_b(M)$ for projective $M$. Now we have a natural transformation 
of two exact functors that is an isomorphism on projectives. Such a transformation is necessarily
an isomorphism. 


\section{Isomorphism of the induction functors}\label{SECTION_iso_inductions}
First of all, we define auxiliary functors
\begin{align}\label{eq:Ind}
\Ind_{b,0}(N)=E_0\circ (\theta_b)^{-1}_*\circ I\circ  (\zeta_0\circ \underline{E}_0)^{-1}(N),\\\label{eq:ind}
\ind_{b,\lambda}(N)=E_0(\left((\widetilde{\theta}_\lambda)_*^{-1}\circ I\circ \zeta_\lambda^{-1}(N)\right)^{\wedge_b})
\end{align}
from $\underline{\Ocat}^+\rightarrow \widetilde{\Ocat}$, where we consider $\zeta_0\circ \underline{E}_0$
as an equivalence  $\underline{\Ocat}^{\wedge_b}\rightarrow \underline{\Ocat}^+$.
We remark that we do not need to apply $\bullet^\heartsuit$ in the definition of $\ind_{b,\lambda}$.
Indeed, $E_0(M')\subset M'^\heartsuit$ for any topological $H$-module because $\eu$
acts locally finitely on any object of $\widetilde{\Ocat}$.

As in Subsection \ref{SUBSECTION_func_def2} one shows that $\Ind_{b,0}\cong \Ind_b$, while
$\ind_{0,\lambda}\cong \ind_{\lambda}$. Then, similarly, to Subsection \ref{SUBSECTION_proof_completion},
it is enough to show that there are 
\begin{itemize}\item[(A)] A homomorphism
 $\ind_{0,\lambda}\rightarrow \ind_{b,\lambda}$ that is an embedding on projectives (for $b$ sufficiently
 close to 0), 
\item[(B)] and an embedding $\ind_{b,\lambda}\hookrightarrow
\Ind_{b,0}$.\end{itemize}

\begin{Lem}\label{Lem:ind_embedding1}
There is a natural transformation  $\ind_{0,\lambda}\rightarrow \ind_{b,\lambda}$ as in (A).
\end{Lem}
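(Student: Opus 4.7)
The plan is to mirror the argument of Proposition~\ref{Prop_emb2} in Subsection~\ref{SUBSECTION_embedding3}, now applied to the object $M_0 := (\widetilde{\theta}_\lambda)_*^{-1}\circ I \circ \zeta_\lambda^{-1}(N)\in \Ocat^\lambda$. Concretely, I would compare the completions $M_0^{\wedge_0}$ and $M_0^{\wedge_b}$ and then take the generalized $0$-eigenspace $E_0$ of $S(\h)^W$. Fix a convex $W$-stable open neighborhood $U$ of $0$ in $\h$ and set $M_0(U):=\K[U]\otimes_{\K[\h]} M_0$, where $\K[U]$ denotes analytic functions on $U$. Exactly as in Lemma~\ref{Lem:embedding20} (reducing to a simple $N$ and using that $\Ocat^\lambda$ has finitely many simple objects and $M_0(U)$ is Noetherian over $\K[U]$), for $U$ sufficiently small both natural maps $M_0(U)\to M_0^{\wedge_0}$ and $M_0(U)\to M_0^{\wedge_b}$ are injective for every $b\in U$. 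Applying $E_0$ yields a diagram $E_0(M_0^{\wedge_0})\longleftarrow E_0(M_0(U))\longrightarrow E_0(M_0^{\wedge_b})$, functorial in $N$.

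The key step is to prove that, after possibly shrinking $U$, the left-hand arrow is an isomorphism. The functor $N\mapsto E_0(M_0^{\wedge_0})=\ind_{0,\lambda}(N)\cong \ind_\lambda(N)$ is exact on $\underline{\Ocat}^+$ by \cite{BE}, Subsection~3.5, while $N\mapsto E_0(M_0(U))$ is only left exact. Since $\underline{\Ocat}^+$ is a length category with finitely many simple objects and enough projectives, it is enough to verify the isomorphism on projective $N$. For such $N$, the functor $N\mapsto M_0$ produces an object admitting a standard filtration in $\Ocat^\lambda$; in particular $M_0$ is free as a $\K[\h]$-module, and the action $a\mapsto y_a$ defines a flat $W$-equivariant connection on $\h$ with regular singularities on the reflection hyperplanes. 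The analytic-extension argument at the end of the proof of Proposition~\ref{Prop_emb2} then applies verbatim: any generator of the finitely generated $\underline{H}$-module $E_0(M_0^{\wedge_0})$ extends to an analytic section over a smaller neighborhood of $0$, and an easy induction on generators (shrinking $U$ if needed) yields the desired isomorphism.

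Composing now gives the required natural transformation $\ind_{0,\lambda}(N)=E_0(M_0^{\wedge_0})\xleftarrow{\sim} E_0(M_0(U))\to E_0(M_0^{\wedge_b})=\ind_{b,\lambda}(N)$, establishing~(A). When $N$ is projective, the same standard-filtration argument forces $M_0$ to be $\K[\h]$-free, so $M_0(U)\hookrightarrow M_0^{\wedge_b}$ remains injective and descends to an injection on $E_0$-eigenspaces, which is the asserted embedding on projectives. The main obstacle I foresee is the isomorphism $E_0(M_0(U))\xrightarrow{\sim}E_0(M_0^{\wedge_0})$ for projective $N$: it depends on the structural input that projective $N\in\underline{\Ocat}^+$ produces a $\K[\h]$-free $M_0\in \Ocat^\lambda$ with standardly-filtered module structure, so that the Dunkl connection has regular singularities on the reflection hyperplanes. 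Once this input is granted, the analytic continuation argument of Subsection~\ref{SUBSECTION_embedding3} transfers with essentially no change.
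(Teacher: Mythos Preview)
Your proposal is correct and follows essentially the same approach as the paper's own proof: both reduce to constructing a functorial map $E_0(M_0^{\wedge_0})\to E_0(M_0^{\wedge_b})$ for $M_0\in\Ocat^\lambda$ by transferring the analytic-extension argument of Proposition~\ref{Prop_emb2}, and both identify the only new input needed as the fact that projectives in $\Ocat^\lambda$ are $\K[\h]$-free (established via the equivalence $\underline{\Ocat}^+\xrightarrow{\sim}\Ocat^\lambda$ sending standards to standards). The paper makes the standards-to-standards claim slightly more explicit by writing down $\Delta^\lambda(\mu)=H\otimes_{S\h\#\underline{W}}\mu\cong\K[\h]\otimes(\K W\otimes_{\K\underline{W}}\mu)$, but this is exactly the ``structural input'' you flagged as the main obstacle.
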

\begin{proof}
The proof closely follows that of Proposition \ref{Prop_emb2}. We need to show that
for all $b$ sufficiently close to 0 there is a functorial homomorphism 
$E_0(M^{\wedge_0})
\rightarrow E_0(M^{\wedge_b})$ for any $M\in \Ocat^\lambda$, and that
this homomorphism is an embedding whenever $M$ is projective.
This is done exactly as in the proof of Proposition \ref{Prop_emb2}, the only two claims  
that we need to check are that $\Ocat^\lambda$ is a length category with enough projectives,
and that any projective is a free $\K[\h]$-module.
For a $\underline{W}$-module $\mu$
one can define the standard object 
$\Delta^\lambda(\mu)=H\otimes_{S\h\#\underline{W}}\mu\cong \K[\h]\otimes (\K W\otimes_{\K\underline{W}}\mu)$,
where $S\h$ acts on $\mu$ via $\lambda$.
The functor $(\widetilde{\vartheta}_\lambda)_*^{-1}\circ I\circ \zeta_\lambda^{-1}\cong (\widetilde{\theta}_\lambda)_*^{-1}\circ I\circ \zeta_\lambda^{-1}$ defines an
equivalence $\underline{\Ocat}^+\rightarrow \Ocat^\lambda$. It is easy to check
that this equivalence maps standards to standards. Since any projective in $\underline{\Ocat}^+$
admits a filtration, whose quotients are standards, we see that any projective in $\Ocat^\lambda$
is free as a $\K[\h]$-module.
\end{proof}

To establish an embedding $\ind_{b,\lambda}\hookrightarrow \Ind_{b,0}$ we argue as in Subsections \ref{SUBSECTION_Functors_R},\ref{SUBSECTION_embedding2}. Namely, we introduce a category
$\widetilde{\Ocat}_R$ of certain $H_R$-modules equipped with an operator, compare with the
definition of $\underline{\Ocat}_R^+$ in Subsection \ref{SUBSECTION_Functors_R}. Then we
construct functors $\Ind_{b,\lambda/t},\ind_{b,\lambda/t}: \underline{\Ocat}^+\rightarrow \widetilde{\Ocat}_R$ as follows:
\begin{align}\label{eq:def:Ind_t}
&\Ind_{b,\lambda/t}(N)= E_0\circ(\theta_b)^{-1}_*\circ I\circ(\zeta_{\lambda/t}\circ\underline{E}_{\lambda/t})^{-1}(R\otimes N),
\\&\ind_{b,\lambda/t}(N)=E_0 (\left((\widetilde{\theta}_{\lambda/t})^{-1}_*\circ I\circ \zeta_{\lambda/t}(R\otimes N)\right)^{\wedge_b}).
\end{align}
To get an operator $\eu_t^N$ on, say, $\Ind_{b,\lambda/t}$, we reverse the procedure of obtaining $\underline{\eu}_t^{+M}$
from $\eu_M^t$, see Subsection \ref{SUBSECTION_Functors_R}. We also remark that in (\ref{eq:def:Ind_t})
we view $\zeta_{\lambda/t}\circ\underline{E}_{\lambda/t}$ as an equivalence  $\underline{\Ocat}_R'\xrightarrow{\sim}
\underline{\Ocat}^+_R$, see Subsection \ref{SUBSECTION_Functors_R}.

By Lemma \ref{Lem:fun_iso21}, $\Ind_{b,0/t}\cong \Ind_{b,\lambda/t}$. Next, similarly to the corresponding
argument in Subsection \ref{SUBSECTION_Functors_R}, we can construct a map
$$\ind_{b,\lambda/t}(N)\rightarrow 
(\theta_b)^{-1}_*\circ I\circ(\zeta_{\lambda/t}\circ\underline{E}_{\lambda/t})^{-1}(R\otimes N).$$ 
This map is obtained
by applying $\exp(X_{12})$. As in Subsection \ref{SUBSECTION_Functors_R}, this map
gives rise to an isomorphism $\ind_{b,\lambda/t}\xrightarrow{\sim} \Ind_{b,\lambda/t}$.

The relation between the functors $\Ind_{b,0}$ and $\Ind_{b,0/t}$ is completely analogous
to that between $\Res_{b,0}$ and $\Res_{b,0/t}$ (see Subsection \ref{SUBSECTION_embedding2}).
Namely, $\Res_{b,0/t}(N)=R\otimes \Res_{b,0}(N)$, and $\Res_{b,0}(N)$ is the quotient
of the submodule of $\eu+t\frac{d}{dt}$-finite elements in $\Res_{b,0/t}(N)$ by $t-1$.

Now let us relate $\ind_{b,\lambda}$ to $\ind_{b,\lambda/t}$.

Set $M:=(\widetilde{\theta}_\lambda)_*^{-1}\circ I\circ \zeta_{\lambda}^{-1}(N)$.
First of all, let us identify $M_t:=(\widetilde{\theta}_\lambda)_*^{-1}\circ I\circ \zeta_{\lambda/t}^{-1}(R\otimes N)$
with $R\otimes  M$. Namely, it is easy to see that $M_t$ gets identified with $R\otimes M$
if we modify the $H_R$-module structure on $R\otimes M$ as follows: $t\cdot m=tm, w\cdot m=wm,
x\cdot m=tx m, y\cdot m=t^{-1}ym$, $m\in M, w\in W, x\in \h^*,y\in \h$, where in the l.h.s.
we have the new action of $H_R$, and in the r.h.s. the action is standard.
Under this identification $M_t^{\wedge_b}$ gets identified with $(R\otimes M)^{\wedge_{bt}}:=
R[\h]^{\wedge_{bt}}\otimes_{\K[\h]}M$. So we need to relate the $H$-module $E_0(M^{\wedge_b})$
to the $H_R$-module $E_0((R\otimes M)^{\wedge_{bt}})$.

The module $(R\otimes M)^{\wedge_{bt}}$ comes equipped with an Euler operator
$\eu_t^M$ induced from $t\frac{d}{dt}$ on $R\otimes M$. We remark that the maximal
ideal $\m_{bt}\subset R[\h]^W$ is stable with respect to $[\eu,\cdot]+t\frac{d}{dt}$.
Consider the quotient $M_n:=(R\otimes M)/\m_{bt}^n$. We want to  understand the structure
of the operator $\eu+\eu_t^M$ on $M_n$. Recall the section $\varphi:\K/\Z\rightarrow \K$
chosen in Subsection \ref{SUBSECTION_embedding2}.

\begin{Lem}\label{Lem:eigenv_3}
There are finitely many elements $\beta_1,\ldots,\beta_k\in \varphi(\K/\Z)$ with $\beta_i-\beta_j\not\in \Z$
such that the $\eu+\eu_t^M$-finite part of
$M_n$ is the direct sum of generalized eigenspaces of $\eu+\eu_t^M$ with eigenvalues $\beta_i+n, n\in \Z$.
All generalized eigenspaces are finite dimensional. Moreover, if $M_n^0$ denotes the sum of
generalized eigenspaces with eigenvalues $\beta_1,\ldots,\beta_k$, then the natural homomorphism
$R\otimes M_n^0\rightarrow M_n$ is an isomorphism.
\end{Lem}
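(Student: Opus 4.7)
The plan is to reduce to a single coset of $\K/\Z$ and then exhibit $M_n$ as $R\otimes_\K M_n^0$ via a ``polynomial avatar'' of the quotient, from which all three assertions of the lemma follow.

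First I would invoke that $\Ocat^\lambda \simeq \underline{\Ocat}^+$ is a finite length category in which $\eu$ acts diagonalizably on each simple (compare the treatment of $M^{\wedge_0\heartsuit}$ in Subsection \ref{SUBSECTION_func_def2}), hence $\eu$ acts locally finitely on $M$. The relations $[\eu,x]=x$, $[\eu,y]=-y$, $[\eu,w]=0$ imply that the $H$-action preserves $\eu$-generalized eigenvalues modulo $\Z$, so $M$ splits as a direct sum of $H$-submodules $M^{(i)}$ whose $\eu$-eigenvalues lie in the cosets $\beta_i+\Z$ for finitely many distinct $\beta_i\in\varphi(\K/\Z)$. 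This decomposition descends to $M_n$, so we may assume $k=1$ and that all $\eu$-eigenvalues of $M$ lie in a single coset $\beta+\Z$.

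Set $R_0:=\K[t,t^{-1}]$ and let $I\subset R_0[\h]^W$ be the ideal generated by $f(x)-f(bt)$, $f\in\K[\h]^W$. Put $M_{n,0}:=(R_0\otimes_\K M)/I^n(R_0\otimes_\K M)$. Flatness of $R$ over $R_0$ yields $M_n\cong R\otimes_{R_0}M_{n,0}$. The operator $T:=\eu+t\tfrac{d}{dt}$ acts locally finitely on $R_0\otimes_\K M$, and each generator $f(x)-f(bt)$ of $I$ is a $T$-eigenvector of integer eigenvalue $\deg f$; hence $T$ stabilizes $I$ and acts locally finitely on $M_{n,0}$ with generalized eigenvalues in $\beta+\Z$. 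The identity $[T,t]=t$ and invertibility of $t$ make multiplication by $t$ a $\K$-linear isomorphism $M_{n,0}^{(\mu)}\xrightarrow{\sim}M_{n,0}^{(\mu+1)}$, so $M_{n,0}\cong R_0\otimes_\K M_{n,0}^{(\beta)}$ as $R_0$-modules. The $\K$-space $M_{n,0}^{(\beta)}$ is finite-dimensional because $M_n$ has finite $R$-dimension (it is finitely generated over the Artinian ring $R[\h]^W/\m_{bt}^n$).

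The main obstacle is the identification $(M_n)^{(\mu)}=M_{n,0}^{(\mu)}$. For $\mu=\beta$, pick a $\K$-basis $v_1,\ldots,v_d$ of $M_{n,0}^{(\beta)}$ adapted to the Jordan decomposition of $N_\beta:=T-\beta$. These $v_i$ are simultaneously an $R$-basis of $M_n\cong R\otimes_\K M_{n,0}^{(\beta)}$, and on this $R$-module we have the commuting decomposition $T-\beta=D+N_\beta$ with $D=t\tfrac{d}{dt}$ acting on the $R$-factor. A Jordan-block induction, using the elementary fact that $D^K r=0$ in $R$ forces $r\in\K$, shows that $(T-\beta)^K\sum_i r_i v_i=0$ implies $r_i\in\K$ for all $i$. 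Combined with the $t$-shift this gives $(M_n)^{(\mu)}=M_{n,0}^{(\mu)}$ for every $\mu$; each is finite-dimensional, the $T$-finite part of $M_n$ equals $M_{n,0}$ with eigenvalues in $\beta+\Z$, and the isomorphism $R\otimes_\K M_n^0\xrightarrow{\sim}M_n$ is obtained by applying $R\otimes_{R_0}-$ to $M_{n,0}\cong R_0\otimes_\K M_{n,0}^{(\beta)}$.
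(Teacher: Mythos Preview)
Your proof is correct and a bit different from the paper's. The paper passes instead through the $\K[[t]]$-module $M_n^+:=(\K[[t]]\otimes M)/\m_{bt}^n$, observes that it is finitely generated over $\K[[t]]$ and hence $t$-adically complete, and deduces the lemma from the finite-dimensionality of the successive quotients $M_n^+/t^mM_n^+$ together with the fact that multiplication by $t$ shifts the $\eu+\eu_t^M$-eigenvalue by $1$. Your route replaces the one-sided ring $\K[[t]]$ by the Laurent polynomial ring $R_0=\K[t,t^{-1}]$; because $t$ is already invertible there, the eigenspace shift is an isomorphism on the nose and you get a clean graded decomposition $M_{n,0}\cong R_0\otimes_\K M_{n,0}^{(\beta)}$ without any filtration argument. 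The remaining point---that no new generalized eigenvectors appear after tensoring up to $R$---you settle by an explicit Jordan-block computation with the commuting pair $(t\tfrac{d}{dt},N_\beta)$, whereas the paper leaves the corresponding step implicit in its filtration argument. Either approach yields the same conclusion; yours is more explicit and avoids completeness considerations, while the paper's fits the $t$-adic framework used in the surrounding text (and produces, as a byproduct noted right after the proof, the identification of the finite part of $M_n^+$ with $(\K[t]\otimes M)/\m_{bt}^n$, which is used later). One small remark: the isomorphism $M_n\cong R\otimes_{R_0}M_{n,0}$ follows already from right-exactness of tensor product, so flatness is not needed at that step; where you do use something is the injectivity of $M_{n,0}\hookrightarrow M_n$, and that is guaranteed by your identification $M_{n,0}\cong R_0\otimes_\K M_{n,0}^{(\beta)}$, which makes $M_{n,0}$ free over $R_0$.
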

\begin{proof}
Consider the $\K[[t]]\otimes H$-module $\K[[t]]\otimes M$ that maps naturally to $R\otimes M$. Then we have
$R\otimes_{\K[[t]]}M_n^+\xrightarrow{\sim }M_n$, where $M_n^+:=\K[[t]]\otimes M/\m_{bt}^n$.  The operator 
$\eu+\eu_M^t$ also acts naturally on $M_n^+,R\otimes_{\K[[t]]}M_n^+$ and the identification 
$R\otimes_{\K[[t]]}M_n^+\xrightarrow{\sim }M_n$ intertwines the corresponding operators.
The $\K[[t]]$-module $M_n^+$ is finitely generated because $\K[[t]]\otimes M$
is finitely generated over $\K[[t]]\otimes\K[\h]$. Hence $M_n^+$ is complete in the $t$-adic topology. All subspaces
$t^m M_n^+$ are $\eu+\eu_t^M$-stable. The claim of the lemma follows easily from the observation
that all quotients $M_n^+/t^m M_n^+$ are finite dimensional over $\K$ and that the multiplication by
$t$ increases the eigenvalue of $\eu+\eu_t^M$ by 1.
\end{proof}

The proof also shows that the subspace $(M_n^+)_{fin}$
of $\eu+\eu^t_M$-finite vectors in $M_n^+$ coincides with  $(\K[t]\otimes M)/\m_{bt}^n$
embedded naturally into $M_n^+$ (the natural map is an embedding because
$M_n=\K[[t]]\otimes_{\K[t]}(\K[t]\otimes M)/\m_{bt}^n$ and the torsion submodule
of the $\K[t]$-module $(\K[t]\otimes M)/\m_{bt}^n$ is supported at 0 thanks to
the operator $\eu+\eu^M_t$).

Let us identify $M_n^0$ with $M/\m_b^n$.
For $m\gg 0$ the space $t^m M_n^0$ lies in the torsion-free
part of the $\K[[t]]$-module $M_n^+$. Then we just consider $t^m M_n^0$ as a subspace
in $(\K[t]\otimes M)/\m_{bt}^n$ and restrict the natural projection (=the quotient by $t-1$)
$(\K[t]\otimes M)/\m_{bt}^n\twoheadrightarrow M/\m_b^n$ to $t^m M_n^0$. From Lemma \ref{Lem:eigenv_3}
it follows easily that this map is a bijection. We remark that the bijection
$M_n^0\rightarrow M/\m_b^n$ is compatible with the natural projections
$M_{n+1}^0\twoheadrightarrow M_{n}^0, M/\m_{b}^{n+1}\twoheadrightarrow M/\m_b^n$
(the claim that the first map is surjective is an easy corollary of Lemma \ref{Lem:eigenv_3}).

It follows that $M^{\wedge_b}$ gets identified with the sum of generalized eigenspaces of
elements of $\varphi(\K/\Z)$ in $M_t^{\wedge_{bt}}$. This is an $H$-module
identification (where   $H$ acts on the latter space  by
$x\mapsto t^{-1}x, y\mapsto ty, w\mapsto w$). From here it is easy
to see that $E_0(M^{\wedge_b})$ gets embedded into $E_0(M^{\wedge_{bt}})$.
This embedding produces an embedding $\ind_{b,\lambda}\hookrightarrow \Ind_{b,0}$
we need.

\end{document}